\newcommand{\Z}{\mathbb Z}
\newcommand{\R}{\mathbb R}
\newcommand{\nonnegR}{\R^+}
\newcommand{\positR}{(0,\infty)}
\newcommand{\C}{\mathbb C}
\newcommand{\N}{\mathbb N}
\newcommand{\E}{\mathbb E}
\newcommand{\ii}{\boldsymbol{i}}
\newcommand{\K}{\mathcal K}
\renewcommand{\P}{\mathbb P}
\newcommand{\tS}{\tilde S}
\newcommand{\bS}{\underline{S}}
\newcommand{\ulmu}{\underline{\mu}}
\newcommand{\tGamma}{\tilde{\Gamma}}
\newcommand{\Xc}{\mathcal X}
\newcommand{\Yc}{\mathcal Y}
\newcommand{\Gammasf}{\mathsf{\Gamma}}
\newcommand{\ind}{\mathbbm{1}}
\newcommand{\gNr}{\mathcal N_\R}
\newcommand{\dd}{\,\mathrm d}
\newcommand{\as}{\quad\mathrm{a.s}}
\DeclareMathOperator*{\tr}{tr}
\DeclareMathOperator*{\supp}{supp}
\DeclareMathOperator*{\diag}{diag}
\DeclareMathOperator*{\var}{Var}
\DeclareMathOperator*{\cov}{Cov}
\DeclareMathOperator*{\esssup}{ess\,sup}
\newcommand{\tran}{\top}
\newcommand{\lmax}{\lambda_{\mathrm{max}}}
\newcommand{\cvweak}{\xrightarrow{\mathcal D}}
\newcommand{\cvprob}{\xrightarrow{\mathcal P}}
\newtheorem{Th}{Theorem}[section]
\newtheorem{lemma}[Th]{Lemma}
\newtheorem{prop}[Th]{Proposition}
\newtheorem{corol}[Th]{Corollary}
\theoremstyle{definition}
\newtheorem{defi}{Definition}
\newtheorem{defi-prop}[Th]{Definition-Proposition}
\newtheorem{example}{Example}[section]
\newtheorem*{example*}{Example}
\theoremstyle{remark}
\newtheorem*{Rq*}{Remark}
\newtheorem{Rq}{Remark}
\title[Unbounded largest eigenvalue of large sample covariance matrices]{Unbounded largest eigenvalue of large sample covariance matrices: Asymptotics, fluctuations and applications}
\author[F. Merlevède]{Florence Merlevède}
\address{Florence Merlevède, Laboratoire d'Analyse et de Mathématiques Appliquées (UMR 8050) 
Université Paris-Est Marne-La-Vallée, 5, boulevard Descartes,
Champs sur Marne, 77454 Marne-La-Vallée Cedex 2, France}
\email{florence.merlevede@u-pem.fr}
\author[J. Najim]{Jamal Najim}
\address{Jamal Najim, Laboratoire d'Informatique Gaspard Monge (UMR 8049)
 Université Paris-Est Marne-La-Vallée, 5, boulevard Descartes,
Champs sur Marne, 77454 Marne-La-Vallée Cedex 2, France}
\email{najim@univ-mlv.fr}
\author[P. Tian]{Peng Tian}
\address{Peng Tian, Laboratoire d'Informatique Gaspard Monge (UMR 8049) et Laboratoire d'Analyse et de Mathématiques Appliquées (UMR 8050)
Université Paris-Est Marne-La-Vallée, 5, boulevard Descartes,
Champs sur Marne, 77454 Marne-La-Vallée Cedex 2, France
}
\email{tianpeng83@gmail.com}
\thanks{The authors gratefully acknowledge the support by Labex BÉZOUT and Grant ANR-17-CE40-0003 HIDITSA}
\keywords{Large sample covariance matrices, largest eigenvalue, long memory stationary processes}
\subjclass[2010]{Primary 15B52, Secondary 15A18, 60B20, 60G10, 60G15}
\date{\today}
\begin{document}
\maketitle

\begin{abstract}

Given a large sample covariance matrix
$
S_N=\frac 1n\Gamma_N^{1/2}Z_N Z_N^*\Gamma_N^{1/2}\, ,
$
where $Z_N$ is a $N\times n$ matrix with i.i.d. centered entries, and $\Gamma_N$ is a $N\times N$ deterministic Hermitian positive semidefinite matrix, we study the location and fluctuations of $\lambda_{\max}(S_N)$, the largest eigenvalue of $S_N$ as $N,n\to\infty$ and $Nn^{-1} \to r\in\positR$ in the case where the empirical distribution $\mu^{\Gamma_N}$ of eigenvalues of $\Gamma_N$ is tight (in $N$) and $\lmax(\Gamma_N)$ goes to $+\infty$. These conditions are in particular met when $\mu^{\Gamma_N}$ weakly converges to a probability measure with unbounded support on $\nonnegR$. 

We prove that asymptotically $\lambda_{\max}(S_N)\sim \lambda_{\max}(\Gamma_N)$. Moreover when the $\Gamma_N$'s are block-diagonal, and the following {\em spectral gap condition} is assumed:
$$
\limsup_{N\to\infty} \frac{\lambda_2(\Gamma_N)}{\lambda_{\max}(\Gamma_N)}<1,
$$
where $\lambda_2(\Gamma_N)$ is the second largest eigenvalue of $\Gamma_N$, we prove Gaussian fluctuations for $\frac{\lambda_{\max}(S_N)}{\lambda_{\max}(\Gamma_N)}$ at the scale $\sqrt{n}$.

In the particular case where $Z_N$ has i.i.d. Gaussian entries and $\Gamma_N$ is the $N\times N$ autocovariance matrix of a long memory Gaussian stationary process $(\Xc_t)_{t\in\Z}$, the columns of $\Gamma_N^{1/2} Z_N$ can be considered as $n$ i.i.d. samples of the random vector $(\Xc_1,\dots,\Xc_N)^\tran$. We then prove that $\Gamma_N$ is similar to a diagonal matrix which satisfies all the required assumptions of our theorems, hence our results apply to this case.
\end{abstract}

\section{Introduction}
\label{sec:intr}
\subsection*{The model.} 
In this paper we consider the following model of sample covariance matrix
\begin{equation}
    S_N=\frac{1}{n}\Gamma_N^{1/2}Z_NZ_N^*\Gamma_N^{1/2} \label{eq:model_main}
\end{equation}
where $Z_N=\left(Z_{i,j}^{(N)}\right)$ is a $N\times n$ matrix whose entries $Z_{i,j}^{(N)}$ are real or complex random variables identically distributed (i.d.) for all $i, j, N$ and independent across $i, j$ for each $N$, satisfying
\begin{equation}\label{eq:moment-conditions}
\E Z_{i,j}^{(N)}=0, \quad \E|Z_{i,j}^{(N)}|^2=1 \quad\text{ and } \quad\E|Z_{i,j}^{(N)}|^4<\infty\,,
\end{equation}
and $\Gamma_N$ is a $N\times N$ deterministic Hermitian positive semidefinite matrix with eigenvalues 
$$0\ \le\  \lambda_N(\Gamma_N)\ \le\ \cdots\ \le\ \lambda_1(\Gamma_N):=\lambda_{\max}(\Gamma_N)\, .
$$ We consider the case where $\lmax(\Gamma_N)$ goes to infinity as $N\to\infty$ while the empirical spectral distribution (ESD) $\mu^{\Gamma_N}$ associated with $\Gamma_N$,
$$
\mu^{\Gamma_N}:=\frac{1}{N}\sum_{k=1}^N\delta_{\lambda_k(\Gamma_N)}
\, , 
$$
forms a tight sequence of probabilities on $\nonnegR:=[0,\infty)$. These conditions encompass the important case where $\mu^{\Gamma_N}$ converges to a limiting distribution with unbounded support on $\R^+$.

In this context, our aim is to study the location and fluctuations of the largest eigenvalue $\lmax(S_N)$ in the asymptotic regime where 
\begin{equation}\label{eq:asymptotic}
N,n\to\infty\qquad \textrm{and}\qquad \frac Nn \to r\in\positR\ .
\end{equation}
The regime \eqref{eq:asymptotic} will be simply refered to as $N,n\to\infty$ in the sequel.

The model $S_N$ defined in \eqref{eq:model_main} is a classical model of sample covariance matrices in the random matrix theory, and its spectral properties have been intensively studied in the regime \eqref{eq:asymptotic} in the last several decades. 

At a global scale, the limiting spectral distribution (LSD) of the ESD $\mu^{S_N}=N^{-1}\sum_{k=1}^N\delta_{\lambda_k(S_N)}$ has been described in the groundbreaking paper by Mar\v cenko and Pastur \cite{marvcenko1967distribution}. In the important case where $S_N=\frac 1n Z_N Z_N^*$, sometimes referred to as the {\em white noise} model, the limiting spectral distribution of $\mu^{S_N}$ is known as Mar\v cenko-Pastur distribution and admits the following closed-form expression
$$
\mathbb{P}_{MP}(\dd\lambda):=\left(1-r^{-1}\right)_+\delta_0(\dd\lambda)+\frac{\sqrt{[(\lambda^+-\lambda)(\lambda-\lambda^-)]_+}}{2\pi r\lambda}\dd \lambda\ ,\qquad \lambda^{\pm} = \left(1\pm\sqrt{r}\right)^2\, , 
$$
where $x_+:=\max(x, 0)$. Later, this result was improved by many others, see for instance \cite{wachter1978strong,jonsson1982some,yin1986limiting,silverstein1995empirical,silverstein1995strong}. In \cite{silverstein1995strong}, Silverstein proved that for the model $S_N$ defined in \eqref{eq:model_main}, if $\mu^{\Gamma_N}$ weakly converges to a certain probability $\nu$ supported on $\nonnegR$ (not necessarily with compact support), then almost surely, the ESD $\mu^{S_N}$ weakly converges to a deterministic distribution $\mu$, whose Stieltjes transform $g_\mu$ is the unique solution with positive imaginary part of the equation
\begin{equation}
g_\mu(z)=\int\frac{1}{s(1-r-rz g_\mu(z))-z}\dd\nu(s)\in\C^+,\quad \forall z\in\C^+\, . \label{eq:intro:stieltjes}
\end{equation}
Central limit theorems have also been established for linear spectral statistics $\sum_{i=1}^N f(\lambda_i(S_N))$, see for instance \cite{jonsson1982some,johansson2000shape,bai2004clt,najim2016gaussian}.

At a local scale, the convergence and fluctuations of individual eigenvalues have been studied, with a special emphasis on the eigenvalues located near each edge of the connected components (bulk) of the LSD of $S_N$. The spiked eigenvalues, that is those which stay away from the bulk of the LSD, have also attracted a lot of attention.

For the white noise model, the support of Mar\v cenko-Pastur's LSD is $[(1-\sqrt{r})^2,(1+\sqrt{r})^2]$, with $\{0\}$ if $r>1$. Geman \cite{geman1980limit} showed that $\lmax(S_N)\to (1+\sqrt{r})^2$ almost surely under moment conditions on the entries. Later, Bai et al. \cite{yin1988limit,bai1988note,bai1993limit} showed that $\lmax(S_N)$ almost surely converges to a finite limit if and only if the fourth moment $\E|Z_{1,1}^{(1)}|^4$ of the entries is finite. Concerning the fluctuations of $\lambda_{\max}(S_N)$, they were first studied by Johansson \cite{johansson2000shape} for standard Gaussian complex entries and by Johnstone \cite{johnstone2001distribution} for standard Gaussian real entries. They both established that 
\begin{equation}
\gamma_N \,n^{2/3} \left (\lambda_{\max}(S_N) - (1+\sqrt{r_N})^2\right) \quad \textrm{where} \quad r_N=\frac Nn \quad \textrm{and} \quad \gamma_N= \frac{r_N^{1/6}}{(1+\sqrt{r_N})^{4/3}}
\label{eq:johnstone}
\end{equation}
converges in distribution to Tracy-Widom (TW) distributions as $N,n\to \infty$, introduced in \cite{tracy1993level, tracy1996orthogonal}, to describe the fluctuations of the largest eigenvalues of GUE and GOE random matrices. 

For general sample covariance matrices \eqref{eq:model_main}, the condition that the spectral norm of $\Gamma_N$ is uniformly bounded:
$$
\sup_{N\ge 1} \| \Gamma_N\| = \sup_{N\ge 1} \lambda_{\max}(\Gamma_N) \ <\ \infty 
$$
implies that the LSD $\mu$ (defined by its Stieltjes transform $g_\mu$ which satisfies \eqref{eq:intro:stieltjes}) has a bounded support. In this case, 
El Karoui  \cite{karoui2007tracy} and Lee and Schnelli \cite{lee2016tracy} established Tracy-Widom type fluctuations of the largest eigenvalue in the complex and real Gaussian case respectively. By establishing a local law, Bao et al \cite{bao2015universality}, and Knowles and Yin \cite{knowles2017anisotropic}
extended the fluctuations of the largest eigenvalue for general entries.

The case of spiked models has been addressed by Baik et al \cite{baik2005phase,baik2006eigenvalues} where some eigenvalues (the spikes) may separate from the bulk. 
In \cite{baik2005phase} where the so-called BBP phase transition phenomenon is described, Baik et al. study the case where $\Gamma_N$ has exactly 
$m$ non-unit eigenvalues $\ell_1\ge \cdots \ge \ell_m$. For complex Gaussian entries, they fully describe the fluctuations of $\lmax(S_N)$ for different configurations of the $\ell_i$'s. Assume for instance that $\ell_1$ is simple (cf. the original paper for the general conditions) then (a) if $\ell_1\le1+\sqrt{r}$, $\lmax(S_N)$ has asymptotically TW fluctuations at speed $n^{2/3}$; (b) if $\ell_1>1+\sqrt{r}$, the sequence
\begin{equation}\label{eq:spiked-finite}
\frac{\sqrt{n}}{\sqrt{\ell_1^2-\ell_1^2r_N/(\ell_1-1)^2}}\left(\lmax(S_N)-\left(\ell_1+\frac{\ell_1r_N}{\ell_1-1}\right)\right)
\end{equation}
is asymptotically Gaussian. In \cite{baik2006eigenvalues} Baik and Silverstein consider general entries and prove the strong convergence of the spiked eigenvalues; Bai and Yao \cite{bai2008central} consider the spiked model with supercritical spikes (corresponding to the case (b) above) and general entries and establish Gaussian-type fluctuations for the spiked eigenvalues. Other results are, non exhaustively, \cite{benaych2011fluctuations,benaych2012singular,couillet2013fluctuations, bai2012sample}. 

To make a rough conclusion from these results, $\lmax(S_N)$ does not in general approach the largest eigenvalue of $\lmax(\Gamma_N)$. Moreover, if $\lmax(S_N)$ converges to the bulk edge of the LSD of $\mu^{S_N}$, then it often has Tracy-Widom fluctuation at the scale $n^{2/3}$. If $\lmax(S_N)$ converges to a point outside the bulk, it often has Gaussian-type fluctuation at the scale $n^{1/2}$. 

The previously mentionned results are limited to the case where $\lmax(\Gamma_N)$ is uniformly bounded. There are however interesting cases where $\lmax(\Gamma_N)$ goes to infinity, see for instance Forni et al. \cite{forni2000generalized} in a context of econometrics.

Recently and mainly fostered by principal component analysis (PCA) in high dimension, there has been a renewed interest in the case where a small number of spiked eigenvalues of the population covariance matrix goes to infinity while the rest of the population eigenvalues remains bounded. Let us mention in growing generality Jung and Marron \cite{jung2009pca}, Shen et al. \cite{shen2013surprising}, Wang and Fan \cite{wang2017asymptotics}, Cai et al. \cite{cai2017limiting}. In the latter, a complete description of the various scenarios of the spikes and their multiplicity is considered, and the first non-spiked eigenvalue's fluctuations are established. In \cite{ledoit2018optimal}, Ledoit and Wolf consider a similar framework referred to as the "Arrow model".

In this article, we complement the general picture by considering population covariance matrices with unbounded limiting spectral distribution. 
Such a case arises in the context of long memory stationary processes and is not covered by the existing results. In the framework considered here, we are not in the case where a majority of 
the population eigenvalues remains bounded. In particular, the assumptions in \cite{wang2017asymptotics,cai2017limiting} fail to hold.

\subsection*{Description of the main results.} Let $S_N$ be defined in \eqref{eq:model_main} and assume that $(\mu^{\Gamma_N})$ is tight with 
$\lim_{N\to\infty} \lmax(\Gamma_N)= \infty$, then we establish in Proposition \ref{prop:main_asympt} that 
\begin{equation}
\frac{\lmax(S_N)}{\lmax(\Gamma_N)}\xrightarrow[N,n\to\infty]{} 1 \label{eq:intro:asympt}\ 
\end{equation}
in probability. This convergence is improved to an almost sure (a.s.) convergence if either the $Z_{i,j}^{(N)}$'s are standard (real or complex) gaussian, or stem from the top left corner of an infinite array $(Z_{i,j},i,j\in \N)$ of i.i.d. random variables. In the case of a triangular array, one might expect an a.s. convergence if $\lmax(S_N)$ concentrates sufficiently fast around its expectation.

In order to describe the fluctuations of $\lmax(S_N)$, we assume in addition  that $(\Gamma_N)$ satisfies the following {\em spectral gap condition}
\begin{equation}
\varlimsup_{N\to\infty} \frac{\lambda_2(\Gamma_N)}{\lambda_{\max}(\Gamma_N)}\ <\ 1\ , \label{eq:gap_condition}    
\end{equation}
where $\lambda_2(\Gamma_N)$ is the second largest eigenvalue of $\Gamma_N$, and that either the $Z^{(N)}_{i,j}$'s are standard Gaussian or the $\Gamma_N$'s have a {\em block-diagonal structure}
\begin{equation}
    \Gamma_N=\begin{pmatrix}\lmax(\Gamma_N) & 0 \\ 0 & \Gammasf_{N-1} \end{pmatrix}\, . \label{eq:struct_condition}
\end{equation}
In this case, the following fluctuation result, stated in Theorem \ref{th:main_fluct}, holds: 
\begin{equation}
    \sqrt{n}\left(\frac{\lmax(S_N)}{\lmax(\Gamma_N)}-1-\frac{1}{n}\sum_{k=2}^{N}\frac{\lambda_k(\Gamma_N)}{\lmax(\Gamma_N)-\lambda_k(\Gamma_N)}\right)
    \xrightarrow[N,n\to\infty]{\mathcal D}
    \gNr(0,\sigma^2) \label{eq:intr:fluct}
\end{equation}
where ``$\cvweak$" denotes the convergence in distribution, $\sigma^2=\E|Z_{1,1}|^4-1$  and $\gNr$ stands for the real Gaussian distribution.

Notice that in \eqref{eq:intr:fluct}, the term $\beta_N:=\frac{1}{n}\sum_{k=2}^{N}\frac{\lambda_k(\Gamma_N)}{\lmax(\Gamma_N)-\lambda_k(\Gamma_N)}$ goes to zero (see for instance Remark \ref{rem:beta-n-to-zero} below), however $\sqrt{n} \beta_N$ may not converge to zero as shown in Example \ref{example:beta-n}.

These results are then applied to long memory stationary processes.

\subsection*{Long memory stationary process.} A process $(\Xc_t)_{t\in\Z}$ is (second order) stationary if the following conditions are satisfied:
$$
\E|\Xc_t|^2<\infty\, ,\qquad \E \Xc_t=\E \Xc_0\qquad \textrm{and} \qquad  \cov(\Xc_{t+h},\Xc_t)=\cov(\Xc_h,\Xc_0)=\gamma(h) \qquad \forall t,h\in\Z$$
where $\cov(\Xc_{t+h},\Xc_t) =  \E (\Xc_{t+h}- \E\Xc_{t+h})\overline{(\Xc_t-\E \Xc_t)}$ and $\gamma:\mathbb{Z}\to \C$ is some positive definite function, usually called the autocovariance function of the process. Note that $\gamma(0)$ is positive and $\gamma(-h)=\overline{\gamma(h)}$ for all $h\in\Z$. By stationarity, the covariance matrices $T_N(\gamma)$ of the process 
\begin{equation}
T_N(\gamma) :=    \cov \begin{pmatrix} \Xc_{t+1} \\ \vdots \\ \Xc_{t+N}\end{pmatrix}=\begin{pmatrix} \gamma(0) & \gamma(-1) & \dots & \gamma(-N+1) \\ \gamma(1) & \ddots  & \ddots & \vdots \\ \vdots & \ddots & \ddots & \gamma(-1) \\ \gamma(N-1) & \dots & \gamma(1) & \gamma(0) \end{pmatrix} \label{eq:autocov}
\end{equation}
are positive semidefinite Hermitian Toeplitz matrices.

By Herglotz's Theorem, there exists a finite positive measure $\alpha$ on $(-\pi,\pi]$, the {\em symbol} of $T_N(\gamma)$, whose Fourier coefficients are exactly $\gamma(h)$, i.e.
$$
\gamma(h)=\frac{1}{2\pi}\int_{(-\pi,\pi]}e^{-\ii hx}\dd\alpha(x)\,, \quad \forall h\in\Z\, .
$$
Depending on the context, we may write $T_N(\gamma)$ or $T_N[\alpha]$.

Tyrtyshnikov and Zamarashkin generalized 
in \cite{tyrtyshnikov2002toeplitz} a result of Szeg{\H o} and proved that the following equality holds 
\begin{equation}
    \lim_{N\to\infty}\frac{1}{N}\sum_{k=1}^N \varphi(\lambda_k(T_N(\gamma)))=\frac{1}{2\pi}\int_{-\pi}^{\pi}\varphi(f_\alpha(x))\dd x\,, \label{eq:szego_lim}
\end{equation}
where $\varphi:\R \mapsto \R$ is continuous with compact support and $f_\alpha\in L^1(-\pi,\pi)$ is the density of the absolutely continuous part of $\alpha$ with respect to the Lebesgue measure $\dd x$ on $(-\pi,\pi]$, called the {\em spectral density} of $T_N(\gamma)$. The equality \eqref{eq:szego_lim} can be interpreted as the vague convergence of probability measures $\mu^{T_N(\gamma)}$ to the measure $\nu$ defined by the integral formula
\begin{equation}
\int\varphi\dd \nu=\frac{1}{2\pi}\int_{-\pi}^{\pi}\varphi(f_\alpha(x))\dd x \quad \forall \varphi\in C_b, \label{eq:def_nu}
\end{equation}
where $C_b$ denotes the space of all bounded continuous functions. The measure $\nu$ being a probability, the sequence $\mu^{T_N(\gamma)}$ is tight, and the vague convergence coincides with the weak convergence.

The process is usually said to have {\em short memory} or {\em short range dependence} if $\sum_{h\in\Z}|\gamma(h)|<\infty$. Otherwise, if $$
\sum_{h\in\Z}|\gamma(h)|=\infty\ ,
$$ the process $(\Xc_t)$ has  {\em long memory} or {\em long range dependence}\footnote{There are several definitions of long range dependance, all strongly related but not always equivalent, see for instance \cite[Chapter 2]{pipira2018long_range}.}. 

In this article we require that the autocovariance function $\gamma$ of a long memory stationary process satisfies
\begin{equation}
    \gamma(h)=\frac{L(h)}{(1+|h|)^{1-2d}}\, ,\quad \forall h\in\Z \label{eq:intro:long_memory}
\end{equation}
for some $d\in(0,1/2)$ and a function $L:\R\to\R$ slowly varying at $\infty$,  that is, a function satisfying $L(y)>0$ for $|y|$ large enough, and
$$
\lim_{y\to\infty}\frac{L(xy)}{L(y)}= 1\quad \forall x>0\, .
$$
In this case $\gamma$ is real and even and $L$ is an even function as well. Matrix $T_N(\gamma)$ is real symmetric and $(\Xc_t)$ is a long memory process. In addition, $\lambda_{\max}(\Gamma_N)\xrightarrow[N\to \infty]{} \infty$, see for instance Theorem \ref{th:main_toepl}.

\subsubsection*{The largest eigenvalue associated with a long memory stationary Gaussian processes.} 
Given a centered stationary process $(\Xc_t)_{t\in\Z}$ with autocovariance function defined by \eqref{eq:intro:long_memory}, one can study the spectral properties of the sample covariance matrix
\begin{equation}
    Q_N:=\frac{1}{n}X_NX_N^*=\frac{1}{n}\sum_{j=1}^n X_{\cdot,j}X_{\cdot,j}^* \label{eq:model_proc}
\end{equation} 
where $X_N$ is a $N\times n$ random matrix whose columns $(X_{\cdot,j},1\le j\le n)$ are i.i.d copies of the random vector $\Xc_{1:N}=(\Xc_1,\dots,\Xc_N)^\tran$. 

Let $T_N(\gamma)$ be the covariance matrix of $(\Xc_t)$, it has been recalled that $\mu^{T_N(\gamma)}$ weakly converges.
Since the process is Gaussian, $Q_N$ can be written in the form of $S_N$ in \eqref{eq:model_main} with $\Gamma_N=T_N(\gamma)$ and the ESD $\mu^{Q_N}$ weakly converges with probability one to a deterministic probability measure $\mu$ by \cite[Theorem~1.1]{silverstein1995strong}.

In order to study the behavior of $\lmax(Q_N)$ and to apply the results already presented, note that the process being gaussian, the matrix model \eqref{eq:model_main} has the same spectral properties as a model where $\Gamma_N$ is replaced by the diagonal matrix obtained with $\Gamma_N$'s eigenvalues. In particular, the block-diagonal structure condition \eqref{eq:struct_condition} is automatically satisfied. It remains to verify
the spectral gap condition \eqref{eq:gap_condition} and that $\lmax(T_N(\gamma))$ goes to infinity. In Theorem \ref{th:main_toepl}, we describe the asymptotic behaviour of the $k$th largest eigenvalue $\lambda_k(T_N(\gamma))$ for any fixed $k$, and prove that there exist positive numbers $a_1>a_2\ge a_3\ge \dots >0$ such that for any $k\ge 1$,  
$$
\lambda_k(T_N(\gamma))\sim a_kN\gamma(N)\qquad \textrm{and}\qquad \lim_{N\to\infty}\frac{\lambda_2(T_N(\gamma))}{\lmax(T_N(\gamma))}=\frac{a_2}{a_1}<1\, , 
$$
hence the spectral gap condition \eqref{eq:gap_condition} holds. Moreover, standard properties of slowly varying functions \cite[Prop. 1.3.6(v)]{bingham1989regular} yield 
that $N\gamma(N)\to\infty$ hence $\lambda_k(T_N(\gamma))\to\infty$ and in particular $\lmax(T_N(\gamma))\to\infty$. As a corollary, we obtain the asymptotics and fluctuations of the largest eigenvalue $\lmax(Q_N)$ for Gaussian long memory stationary processes with autocovariance function defined in \eqref{eq:intro:long_memory}.

We now point out two references of interest: In the (non-Gaussian) case where the symbol $\alpha$ is absolutely continuous with respect to the Lebesgue measure and under additional regularity conditions on $(\Xc_t)$, Merlevède and Peligrad \cite{merlevede2016empirical} have established the convergence of the ESD $\mu^{Q_N}$ toward a certain deterministic probability distribution. In a context of a stationary Gaussian field, Chakrabarty et al. \cite{chakrabarty2016random} studied large random matrices associated with long range dependent processes.

\subsection*{Organization.} 

Our paper is organized as follows. In Section \ref{sec:main_results} we state the assumptions and main results of the article: Proposition~\ref{prop:main_asympt} and Theorem~\ref{th:main_fluct} are devoted to the limiting behaviour and fluctuations of $\lmax(S_N)$; the spectral gap condition for a Toeplitz matrix $\Gamma_N$ is studied in Theorem~\ref{th:main_toepl}; finally Corollary~\ref{th:process} builds upon the previous results and describes the behaviour and fluctuations of covariance matrices based on samples of stationary long memory Gaussian processes. In Section \ref{sec:miscellany}, we provide examples, numerical simulations and mention some open questions. Section \ref{sec:proof_main} and Section \ref{sec:proof_toepl} are dedicated to the proofs of the main theorems.

\subsection*{Acknowledgement.}The authors would like to thank Walid Hachem for useful discussions and the two referees for helpful comments which improved the presentation of the paper. 

\section{Notations and main theorems}
\label{sec:main_results}
\subsection{Notations and assumptions}
\subsubsection*{Notations.} Given $x\in\R$, denote by $\lfloor x\rfloor$ the integer satisfying $ \lfloor x\rfloor\le x <\lfloor x\rfloor+1$. 
For vectors $u,v$ in $\R^N$ or $\C^N$, $\langle u,v\rangle = \sum_{i=1}^N u_i\bar{v}_i$ denotes the scalar product and $\|u\|$ the Euclidean norm of $u$.

For a matrix or a vector $A$, we use $A^\tran$ to denote the transposition of $A$, and $A^*$ the conjugate transposition of $A$; if $A$ is a $N\times N$ square matrix with real eigenvalues, we use $\lambda_1(A)\ge \dots\ge\lambda_N(A)$ to denote its eigenvalues, and sometimes denote $\lambda_1(A)=\lmax(A)$. The ESD $\mu^A$ 
of $A$ is defined as 
$$
\mu^A:=\frac 1N \sum_{k=1}^N \delta_{\lambda_k(A)}\, , 
$$
where $\delta_\lambda$ is the Dirac measure at $\lambda$. For a $N\times n$ matrix $M$ and integers $a,b\in \{ 1,\cdots, N\}$ and $c,d\in \{ 1,\cdots, n\}$, 
the following notations are used to deal with submatrices of $M$:
\begin{equation}
M_{a:b,\cdot}\, =\, (M_{i,j})_{a\le i\le b,1\le j\le n}\, ,\quad M_{\cdot,c:d}\,=\,(M_{i,j})_{1\le i\le N,c\le j\le d}\, ,\quad
M_{a:b,c:d}\, =\,(M_{i,j})_{a\le i\le b,c\le j\le d}\, .\label{eq:def_submatrix}
\end{equation}
By convention, these subscripts have higher priority than the transposition or conjugate transposition, for example $M_{a:b,c:d}^*:=(M_{a:b,c:d})^*$ is the conjugated transposition of the submatrix $M_{a:b,c:d}$. For a matrix $A$, we write its operator norm as $\|A\|=\sup_{\|v\|=1}\|Av\|$ and its Frobenius norm $
\|A\|_F=\sqrt{\sum\nolimits_{i,j}|A_{i,j}|^2}$.

If $c=(c_k)_{k\in\Z}$ is a sequence of complex numbers, the $N\times N$ Toeplitz matrix $(c(i-j))$ is denoted by $T_N(c)$. If moreover the sequence $(c_k)$ is a positive-definite function $c: \mathbb{Z}\to \C$ and admits by Herglotz's theorem the representation 
$$
c_k = \frac 1{2\pi} \int_{(-\pi,\pi]} e^{-\ii kx} \dd \alpha(x)\, , 
$$  
then $\alpha$ is called the symbol of $T_N(c)$ which will sometimes be written $T_N[\alpha]$. If moreover $\alpha$ admits a density with respect to Lebesgue's measure, i.e. $\dd \alpha(x) = f(x) \dd x$, $T_N(c)$ will occasionnally be denoted by $T_N[f]$. Notice that if $T_N(c)$ is the covariance matrix of a stationary process as in \eqref{eq:autocov} then $f(x)$ (if it exists) is called the spectral density of the process.

Given two complex sequences $x_n, y_n$ we denote 
\begin{equation}
x_n\sim y_n\ \Leftrightarrow\  \lim_{n\to\infty}\left(\frac{x_n}{y_n}\right)=1\qquad \textrm{and}\qquad 
x_n\doteq y_n\ \Leftrightarrow\ \lim_{n\to\infty}(x_n-y_n)=0\, . 
\end{equation}
The notations $x_n=o(1)$ and $x_n=O(1)$ respectively mean $\lim_{n\to\infty} x_n=0$ and $\varlimsup_{n\to\infty} |x_n|<\infty$. These notations are also applicable to functions with continuous arguments. If $X_n,X$ are random variables, the notations $X_n=o(1)$ and $X_n=o_P(1)$ respectively mean that $\lim_{n\to\infty} X_n=0$ almost surely and in probability. The notations $X_n\cvweak X$ and $X_n\cvprob X$ respectively denote convergence in distribution and in probability. If $\mu,\mu_n$ are measures, we denote with a slight abuse of notation $\mu_n\cvweak \mu$ for the weak convergence of $\mu_n$ to $\mu$. 

Given a random variable $Y$ or a sub-algebra $\mathcal G$, we denote by $\E_Y(X)$ and $\E_{\mathcal G}(X)$ the conditional expectation of the random variable $X$ with respect to $Y$ and to ${\mathcal G}$.

We denote by ${\mathcal N}_\R(m,\sigma^2)$ the real Gaussian distribution with mean $m$ and variance $\sigma^2$; we refer to ${\mathcal N}_\R(0,1)$ as the standard real Gaussian distribution. A complex random variable $Z$ is distributed according to the standard complex Gaussian distribution if $Z=U+\ii V$ where $U,V$ are independent, each with distribution ${\mathcal N}_\R(0,1/2)$. In this case we denote $Z\sim {\mathcal N}_\C(0,1)$. For a symmetric semidefinite positive matrix $T$, denote by ${\mathcal N}_\R(0,T)$ the distribution of a centered Gaussian vector with covariance matrix $T$.

In the proofs we use $C$ to denote a constant that may take different values from one place to another. 

\subsubsection*{Assumptions.}We state our results under one or several of the following assumptions:
\vspace{.3cm}
\begin{enumerate}[leftmargin=*, label={\bf A\arabic{enumi}}]
    \item \label{ass:model_setting}(Model setting) Let $S_N$ be $N\times N$ random matrices defined as
    $$S_N=\frac{1}{n}\Gamma_N^\frac{1}{2}Z_NZ_N^*\Gamma_N^\frac{1}{2}$$
    where $Z_N=\left(Z_{i,j}^{(N)}\right)_{1\le i\le N, 1\le j\le n}$ are $N\times n$ matrix whose entries $Z_{i,j}^{(N)}$ are real or complex. The $Z_{i,j}^{(N)}$'s are i.d. random variables for all $i,j, N$, and independent across $i,j$ for each $N$, satisfying
    $$\E Z_{i,j}^{(N)}=0, \quad \E|Z_{i,j}^{(N)}|^2=1 \quad\text{ and } \quad\E|Z_{i,j}^{(N)}|^4<\infty,$$
    and $\Gamma_N$ are $N\times N$ positive semidefinite Hermitian deterministic matrices.
    \end{enumerate}
    Notice that in the assumption above, we do not require that $\E\left( Z_{i,j}^{(N)}\right)^2=0$ in the case of complex r.v.  but this is automatically fulfilled in the case of standard complex gaussian random variables.
 \begin{enumerate}[leftmargin=*, label={\bf A\arabic{enumi}}]
\setcounter{enumi}{1}   
    \item \label{ass:asympt_spec_st}(Asymptotic spectral structure of $\Gamma_N$) 
    Given a sequence of $N\times N$ positive semidefinite deterministic matrices $\Gamma_N$, the empirical spectral distribution
    $$\mu^{\Gamma_N}:=\frac{1}{N}\sum_{k=1}^N\delta_{\lambda_k(\Gamma_N)}$$
    forms a tight sequence on $\nonnegR$, and the largest eigenvalue $\lmax(\Gamma_N)$ tends to $\infty$ as $N\to\infty$.
    \end{enumerate}

\begin{example} 
If $\mu^{\Gamma_N}\cvweak \nu$ with $\nu$ a non-compactly supported probability on $\nonnegR$, then \ref{ass:asympt_spec_st} holds.
\end{example}

\begin{example} Consider $\Gamma_N=\diag(\ell_1^{(n)},\cdots, \ell_m^{(n)},1,\cdots, 1)$ where $m=m(n)$ is such that $\frac{m(n)}n\to 0$ and where $\ell_i^{(n)} \nearrow \infty$ ($1\le i\le  m$),  then \ref{ass:asympt_spec_st} holds. The illustrative and simpler case where $\Gamma_N=\diag (\ell_1^{(n)},1,\cdots, 1)$ will be used hereafter.  
\end{example}
\begin{enumerate}[leftmargin=*, label={\bf A\arabic{enumi}}]
\setcounter{enumi}{2}
    \item \label{ass:infarray}(Subarray assumption on $Z_N$) For each $N$, $Z_N=Z_{1:N,1:n}$ is the top-left submatrix of an infinite matrix $Z=(Z_{i,j})_{i,j\ge 1}$, with $Z_{i,j}$ i.i.d random variables satisfying 
    $$\E Z_{i,j}=0, \quad \E|Z_{i,j}|^2=1 \quad\text{ and } \quad\E|Z_{i,j}|^4<\infty.$$
    \end{enumerate}
\begin{Rq} 
Compared to \ref{ass:model_setting}, this subarray assumption (where the matrix entries do not depend on $N,n$) is mainly needed to state almost sure convergence results (see Proposition \ref{prop:main_asympt} below) and to fully exploit Bai and Silverstein's results on spectrum confinement \cite{bai1998no}. Notice that under this assumption, interlacing properties of eigenvalues hold true since the realization of the random variables remains the same as $N,n\to \infty$. 
\end{Rq}
    \begin{enumerate}[leftmargin=*, label={\bf A\arabic{enumi}}]
\setcounter{enumi}{3}
    \item \label{ass:separation}(Spectral gap condition on $\Gamma_N$) The two largest eigenvalues $\lmax(\Gamma_N)$ and $\lambda_2(\Gamma_N)$ satisfy
     $$\varlimsup_{N\to\infty}\frac{\lambda_2(\Gamma_N)}{\lmax(\Gamma_N)}<1.$$ 
     \end{enumerate}
Notice that this spectral gap condition already appears in \cite{shen2013surprising,wang2017asymptotics,cai2017limiting}.
     
\begin{enumerate}[leftmargin=*, label={\bf A\arabic{enumi}}]
\setcounter{enumi}{4}     
     
    \item  \label{ass:diagonal}(Block-diagonal structure of $\Gamma_N$) For all $N$, $\Gamma_N$ has the block-diagonal form
    $$
    \Gamma_N=\begin{pmatrix}\lmax(\Gamma_N) & 0 \\ 0 & \Gammasf_{N-1} \end{pmatrix}\, , 
    $$
    where $\Gammasf_{N-1}$ is a $(N-1)\times (N-1)$ semidefinite positive Hermitian matrix.
\end{enumerate}
\vspace{.3cm}

\subsection{Main results}
We now present the main results of this article. Recall that the asymptotic regime $N,n\to\infty$ (cf. \eqref{eq:asymptotic}) stands for 
$$
N,n\to\infty\qquad \textrm{and}\qquad \frac Nn \to r\in (0,\infty)\, .
$$ Proposition \ref{prop:main_asympt} and Theorem \ref{th:main_fluct} describe the limiting behaviour and fluctuations of $\lmax(S_N)$ under generic assumptions. Theorem \ref{th:main_toepl} and Corollary \ref{th:process} specialize the previous results to Toeplitz covariance matrices and Gaussian long memory stationary processes.

\begin{prop} \label{prop:main_asympt}
Let $S_N$ be a $N\times N$ matrix given by \eqref{eq:model_main} and assume that \ref{ass:model_setting} and \ref{ass:asympt_spec_st} hold. Then 
$$
\frac{\lmax(S_N)}{\lmax(\Gamma_N)}\xrightarrow[N,n\to\infty]{\mathcal P} 1\,.
$$
If moreover either the random variables $Z_{ij}^{(N)}$ are standard (real or complex) Gaussian or Assumption \ref{ass:infarray} holds, then the above convergence holds almost surely.
\end{prop}
{This result already appears under different assumptions in \cite[Prop. 7.3]{ledoit2018optimal}, \cite[Th. 2.1]{cai2017limiting}.}
\begin{Rq} [consistency with the bounded case $\|\Gamma_N\|<\infty$]
Consider the simple case where $\Gamma_N=\diag(\ell_1,1,\cdots,1)$, where $\ell_1>1+ \sqrt{r}$ is fixed - see for instance \eqref{eq:spiked-finite}. Then it is well known (cf. \cite{baik2006eigenvalues}) that 
$$
\lmax(S_N) \quad \xrightarrow[N,n\to\infty]{\mathcal P}\quad  \ell_1+\frac{r\, \ell_1}{\ell_1-1}\, .
$$
Notice in particular that $\frac{\lmax(S_N)}{\ell_1} = 1+\frac{r}{\ell_1-1} + o_P(1)
$, which is heuristically consistent with Proposition \ref{prop:main_asympt} if one lets $\ell_1$ go to infinity.
\end{Rq}
\begin{Th}\label{th:main_fluct}
Let $S_N$ be a $N\times N$ matrix given by \eqref{eq:model_main} and assume that \ref{ass:model_setting}, \ref{ass:asympt_spec_st}  and \ref{ass:separation} hold.
Assume moreover that one of the following conditions is satisfied:
\begin{enumerate}[label=(\roman*), itemsep=-3pt]
\item Assumption \ref{ass:diagonal} holds,
\item The random variables $Z_{ij}^{(N)}$ are standard complex Gaussian,
\item The random variables $Z_{ij}^{(N)}$ are standard real Gaussian and matrices $\Gamma_N$ are real symmetric. 
\end{enumerate}
Consider the quantities
\begin{equation}
\beta_N := \frac{1}{n}\sum_{k=2}^{N}\frac{\lambda_k(\Gamma_N)}{\lmax(\Gamma_N)-\lambda_k(\Gamma_N)}\qquad \textrm{and}\qquad F_N:=\sqrt{n}\left(\frac{\lmax(S_N)}{\lmax(\Gamma_N)}-1-\beta_N\right). \label{eq:def_FN}
\end{equation}
Then 
\begin{equation}
    F_N\xrightarrow[N,n\to\infty]{\mathcal D} \gNr(0,\sigma^2)\, , \label{eq:fluct}
\end{equation}
where $\sigma^2=\E|Z_{1,1}^{(1)}|^4-1$.
\end{Th}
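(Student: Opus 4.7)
The plan is to reduce to case (i), apply a Schur complement to derive a scalar equation for $\lambda:=\lmax(S_N)$, and then analyse its fluctuations around the deterministic level $\lambda_1(1+\beta_N)$. Cases (ii) and (iii) reduce to (i) by unitary (resp.\ orthogonal) invariance of the law of $Z_N$ applied to a spectral decomposition $\Gamma_N=U_N D_N U_N^*$: this leaves the spectrum of $S_N$ unchanged and puts $\Gamma_N$ in diagonal---hence block-diagonal---form after sorting eigenvalues decreasingly. We may thus assume $\Gamma_N=\diag(\lambda_1,\Gammasf_{N-1})$ with $\lambda_1:=\lmax(\Gamma_N)$; partition $Z_N=\binom{z^*}{Y}$ with $z\in\C^n$ the adjoint of the first row and $Y=Z_{2:N,\cdot}$, and introduce the companion matrix $T_N:=n^{-1}\Gammasf_{N-1}^{1/2}YY^*\Gammasf_{N-1}^{1/2}$. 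Theorem~\ref{th:main_asympt} applied to $T_N$ combined with \ref{ass:separation} gives $\limsup_N \lmax(T_N)/\lambda_1<1$ in probability (if $\lmax(\Gammasf_{N-1})\to\infty$; otherwise $\lmax(T_N)$ stays bounded), while the same theorem applied to $S_N$ gives $\lambda/\lambda_1\cvprob 1$; hence $\lambda>\lmax(T_N)$ eventually. A Schur complement together with the push-through identity $A^*(\mu I-AA^*)^{-1}A=(\mu I-A^*A)^{-1}A^*A$ then transforms the characteristic equation of $\lambda$ into the scalar fixed-point equation
$$
1 \,=\, \frac{\lambda_1}{n}\, z^*(\lambda I-\tilde S)^{-1} z, \qquad \tilde S:=\frac{1}{n}Y^*\Gammasf_{N-1}Y.
$$

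Multiplying by $\lambda$ and applying $\lambda(\lambda I-\tilde S)^{-1}=I+\tilde S(\lambda I-\tilde S)^{-1}$ yields the exact identity $n\hat\lambda=z^*z+z^*\tilde S(\lambda I-\tilde S)^{-1}z$ with $\hat\lambda:=\lambda/\lambda_1$; subtracting $n+n\beta_N$ and dividing by $\sqrt n$ produces the exact decomposition
$$
F_N \,=\, \frac{z^*z-n}{\sqrt n} \,+\, \frac{z^*\tilde S(\lambda I-\tilde S)^{-1}z - n\beta_N}{\sqrt n}.
$$
Since $z^*z-n=\sum_{j=1}^n(|Z_{1,j}|^2-1)$ is a centered i.i.d.\ sum of variance $\sigma^2=\E|Z_{1,1}|^4-1$ and is independent of $\tilde S$, the first summand converges to $\gNr(0,\sigma^2)$ by the classical CLT. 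The proof reduces to showing that the second summand is $o_P(1)$.

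For the second summand, I would decompose it as $(\mathrm I)+(\mathrm{II})+\varepsilon_N$, where
$$
(\mathrm I):=\frac{z^*\tilde S(\lambda_\ast I-\tilde S)^{-1}z-\tr[\tilde S(\lambda_\ast I-\tilde S)^{-1}]}{\sqrt n},\quad (\mathrm{II}):=\frac{\tr[\tilde S(\lambda_\ast I-\tilde S)^{-1}]-n\beta_N}{\sqrt n},
$$
with $\lambda_\ast:=\lambda_1(1+\beta_N)$ and $\varepsilon_N$ the error introduced by replacing $\lambda$ by $\lambda_\ast$ inside the resolvent. A first-resolvent-identity computation gives $\varepsilon_N=-F_N\cdot O_P(\beta_N)+o_P(1)$, which combines with $F_N$ on the left to leave a multiplicative factor $1+O(\beta_N)\to 1$; here we use $\beta_N\to 0$, itself a consequence of the tightness of $\mu^{\Gamma_N}$, the divergence $\lambda_1\to\infty$, and \ref{ass:separation}. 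Step~(I) is handled by conditional Chebyshev using the Frobenius bound $\|\tilde S(\lambda_\ast I-\tilde S)^{-1}\|_F^2\le(1-c)^{-2}\tr\tilde S^2/\lambda_1^2=o_P(n)$ (the latter following from tightness of $\mu^{\Gamma_N}$), and step~(II), the deterministic equivalent, is established by expanding both sides as geometric series $\sum_{m\ge 1}\tr\tilde S^m/\lambda_\ast^m$ and $\sum_{m\ge 1}\tr\Gammasf_{N-1}^m/\lambda_1^m$ (convergent geometrically by the spectral gap), and controlling each moment mismatch $\tr\tilde S^m-\tr\Gammasf_{N-1}^m$ via $\tilde S=n^{-1}Y^*\Gammasf_{N-1}Y$ and elementary fourth-moment estimates.

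The main technical obstacle is step~(II), whose required precision $o(\sqrt n)$ on a linear spectral statistic of $\tilde S$ is of the same order as CLT-scale fluctuations and cannot be obtained by a bare LLN. The key leverage is the separation $\lmax(\tilde S)/\lambda_1\le c<1$ provided by \ref{ass:separation}, which turns $\tilde S/\lambda_1$ into a genuine contraction in operator norm and allows truncation of the geometric series at a finite number of terms with geometrically small remainder; each remaining term is then controlled by an elementary fourth-moment estimate on the i.i.d.\ entries of $Y$.
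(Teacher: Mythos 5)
Most of your architecture matches the paper's proof of its normalized version (Theorem \ref{th:main_fluct_bis}): the reduction of (ii)--(iii) to (i) by rotation invariance, the Schur complement identity $1=\tfrac{\lambda_1}{n}z^*(\lambda I-\tilde S)^{-1}z$, the splitting of $F_N$ into the i.i.d.\ sum $(z^*z-n)/\sqrt n$ plus a remainder, your step (I) (which is the paper's Proposition \ref{prop:useful-estimates}(b), proved exactly by the conditional quadratic-form/Chebyshev bound you describe), and the replacement error $\varepsilon_N$ (the paper avoids it by evaluating at the deterministic point $\eta_N=\theta_N+b/\sqrt n$ and using monotonicity of $\Upsilon$, but your route is a fixable variant, provided you bound $\varepsilon_N$ through the concentrated quadratic form rather than a crude operator-norm bound). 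The genuine gap is in step (II). The statement you need there is true --- it is precisely the paper's Proposition \ref{prop:useful-estimates}(a), i.e.\ $\sqrt n\bigl(\tfrac1n\tr(\theta_N I-\bS_{2:N,2:N})^{-1}-\underline{\mathsf m}(\theta_N)\bigr)\cvprob 0$ --- but your proposed proof by matching the geometric series term by term fails. The deterministic centering $n\beta_N=\sum_{m\ge1}\tr\Gammasf_{N-1}^m/\lambda_1^m$ does \emph{not} agree with $\E\,\tr\bigl[\tilde S^m\bigr]/\lambda_*^m$ order by order to precision $o(\sqrt n)$: already at $m=2$ the Wishart bias gives $\E\tr\tilde S^2-\tr\Gammasf_{N-1}^2\approx(\tr\Gammasf_{N-1})^2/n$, and at $m=1$ the discrepancy between $\lambda_*$ and $\lambda_1$ contributes $\approx-(\tr\Gammasf_{N-1})\beta_N/\lambda_1$; each of these is of size $\asymp n\beta_N^2$ and they cancel only when summed \emph{across} different $m$. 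Concretely, take $\Gamma_N=\diag(\ell_1,1,\dots,1)$ with $\ell_1=\log n$ (allowed: $\lmax(\Gamma_N)\to\infty$ may be arbitrarily slow under \ref{ass:asympt_spec_st}); then $n\beta_N^2\asymp n/\log^2 n\gg\sqrt n$, so your $m=1$ and $m=2$ mismatches are each $\gg\sqrt n$ while only their sum is $o(\sqrt n)$. A term-by-term ``elementary fourth-moment estimate'' can therefore never give the required bound; the cross-order cancellation is exactly the self-consistent (Mar\v cenko--Pastur) structure, i.e.\ the fact that the correct deterministic centering is the Stieltjes transform of the deterministic equivalent $\ulmu(\mathsf r_N,\Gammasf_{N-1})$ evaluated at the self-consistent point $\theta_N=\mathsf x_N(1)$ (for which $\underline{\mathsf m}(\theta_N)=1$ exactly), not the naive moment series of $\Gammasf_{N-1}$.

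This is also where the paper invests its only heavy input: it proves (a) by showing that $n\bigl(\tfrac1n\tr(\eta_N I-\bS_{2:N,2:N})^{-1}-\underline{\mathsf m}(\eta_N)\bigr)$ is \emph{tight}, via the Bai--Silverstein CLT machinery for linear spectral statistics (including the $O(1/n)$ bias of the expected resolvent trace), adapted through a contour argument to the $N$-dependent evaluation point. Note that the random fluctuation part of your (II) is indeed of order $O_P(1)$ and could be handled by variance bounds as you suggest; it is the deterministic bias that cannot be controlled without the self-consistent centering. Your plan could be salvaged either by importing that linear-statistics input (as the paper does), or by adding a hypothesis such as $\sqrt n\,\beta_N^2\to0$ (e.g.\ $\lmax(\Gamma_N)\gg n^{1/4}$), which the theorem as stated does not grant.
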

{Counterparts of Theorem \ref{th:main_fluct} appear under the assumption that the $(\lambda_i(\Gamma_N))$'s are bounded for $i\ge K$ and $K=o(N)$, see \cite[Th. 3.1]{wang2017asymptotics}, \cite[Th. 2.2]{cai2017limiting}. In this latter case, the quantity $\beta_N$ above can be replaced by $n^{-1} \sum_{K+1}^N \lambda_i(\Gamma_N)/(\lmax(\Gamma_N) - \lambda_i(\Gamma_N))$. Beware however that under our assumption, the full summation is required because there is no natural threshold $K$ if one does not assume boundedness on the majority of the population eigenvalues.}
\begin{Rq} Notice that if $\E|Z_{1,1}^{(1)}|^4=1$ then $\sigma^2=0$ in the previous theorem, hence 
$
F_N\xrightarrow[N,n\to\infty]{\mathcal P} 0\, .
$
Simulation 3 in Section \ref{sec:simulations} (see also Fig. \ref{fig:fluct_bin_nondiag}) supports this fact.
\end{Rq}

\begin{Rq}\label{rem:beta-n-to-zero} Under \ref{ass:asympt_spec_st} and \ref{ass:separation}, we have
$
\beta_N
\xrightarrow[N,n\to\infty]{} 0
$.
Indeed, by the spectral gap condition \ref{ass:separation} and the fact that $N=O(n)$ 
$$
\beta_N\ =\ \frac{1}{n}\sum_{k=2}^{N}\frac{\lambda_k(\Gamma_N)/\lmax(\Gamma_N)}{1-\lambda_k(\Gamma_N)/\lmax(\Gamma_N)}\ \le\ \frac{C}{N}\sum_{k=2}^{N}\frac{\lambda_k(\Gamma_N)}{\lmax(\Gamma_N)}\,.
$$
Since $\mu^{\Gamma_N}$ is tight, for any $\varepsilon\in (0,1)$ there exists $M>0$ s.t. $|\{k,\,:\lambda_k(\Gamma_N)>M\}|/N<\varepsilon$ where $|\{\cdot\}|$ denotes the cardinality of a set. Hence
$$
\varlimsup_{N,n\to\infty} \beta_N\,\le\, C\varlimsup_N\frac{M}{\lmax(\Gamma_N)}+C\varepsilon\,=\,C\varepsilon\,,$$
where we use the fact that $\lmax(\Gamma_N)\to\infty$ as $N\to\infty$ for the last equality. Notice however that $\sqrt{n}\beta_N$ may not go to zero as $N,n\to\infty$.
\end{Rq}

\begin{Rq}[consistency with the bounded case $\|\Gamma_N\|<\infty$, continued]
Consider again the case where $\Gamma_N=\diag(\ell_1,1,\cdots, 1)$ with $\ell_1>1+\sqrt{r}$ then
$$
\beta_N=\frac {N-1}n \frac 1{\ell_1- 1} = \frac{r_N}{\ell_1- 1} + O\left(\frac 1n \right)\, .
$$
In the case where $\ell_1\to\infty$, $F_N$ in \eqref{eq:def_FN} writes
$$
F_N=\sqrt{n}\left( \frac{\lmax(S_N)}{\ell_1} - \left( 1+\frac{r_N}{\ell_1-1}\right)
\right)+ O\left(\frac 1{\sqrt{n}}\right)
$$
and has Gaussian fluctuations. This formula is consistent with \eqref{eq:spiked-finite} which can be rewritten
$$
\frac{\sqrt{n}}{\sqrt{1+O(\ell_1^{-2})}} \left( \frac {\lmax(S_N)}{\ell_1} - \left( 1 +\frac{r_N}{\ell_1-1}\right)\right)\, .
$$
\end{Rq}
\begin{example}[various behaviours of $\sqrt{n} \beta_N$]
\label{example:beta-n} Consider $\Gamma_N=\diag(\ell_1^{(n)},1,\cdots, 1)$ where $\ell_1^{(n)} \nearrow \infty$, then
$$
\sqrt{n} \beta_N = \sqrt{n}\times  \left(\frac {N-1}n\right) \times \left(\frac{1}{\ell_1 - 1}\right) \quad \xrightarrow[N,n\to\infty]{} \quad \left\{
\begin{array}{lcl}
\infty&\textrm{if}& \ell_1^{(n)} \ll \sqrt{n}\, ,\\
r/a&\textrm{if}&  \ell_1^{(n)} = a\sqrt{n}\, , \\
0&\textrm{if}& \ell_1^{(n)} \gg \sqrt{n}\, .
\end{array}
\right.
$$ 
\end{example}

\subsection{Application to large sample covariance matrices associated with long memory processes} 
In order to apply the above results to Gaussian stationary processes with long memory, we need to verify the spectral gap condition of their autocovariance matrices. We first recall some definitions.

\begin{defi}[Regularly/Slowly varying functions] \label{defi:regular_slow}
A measurable function $R:\R\to \R$ is regularly varying at infinity if $R(y)>0$ for $|y|$ large enough and if there exists a real number $\rho$ s.t. for any $x>0$,
$$\lim_{y\to\infty}\frac{R(xy)}{R(y)}=x^\rho.$$
The number $\rho$ is called the index of the regular variation. If $\rho=0$, then we say that the function (often denoted by $L$ in this case) is slowly varying.

A sequence of real numbers $(c_k)_{k\in\Z}$ is regularly (resp. slowly) varying if $y\mapsto c_{\lfloor y\rfloor}$ is a regularly (resp. slowly) varying function.
\end{defi}
With Definition~\ref{defi:regular_slow}, long memory (long range dependence) stationary processes can be defined as follows.
\begin{defi} \label{defi:long_mem}
A stationary process $(\Xc_t)_{t\in\Z}$ has long memory or long range dependence if its autocovariance function $\gamma$ is regularly varying with index $\rho\in(-1,0)$.
\end{defi}
\begin{Rq} Notice that this definition is compatible with the definition of the autocovariance function provided in \eqref{eq:intro:long_memory}. 
In fact, assume that $\gamma(h)$ is given by \eqref{eq:intro:long_memory} then it is regularly varying with index $\rho=2d-1\in (-1,0)$.
Conversely, assume that $\gamma(h)$ is an even regularly varying sequence with $\rho\in (-1,0)$. Set $d =\frac{\rho+1}2$, then 
$
L(y)= \gamma(\lfloor y\rfloor ) (1+|y|)^{1-2d}
$ is a slowly varying function with $d\in (0,1/2)$ and 
$$
\gamma(h) = \frac{L(h)}{(1+|h|)^{1-2d}}\ .
$$
\end{Rq}
\begin{Rq}
Notice that definitions \ref{defi:regular_slow} and \ref{defi:long_mem} enable to consider complex processes $(\Xc_t)$, however the associated autocovariance function is necessarily real and cannot be complex.
\end{Rq}
\begin{Rq}
The above definition coincides with Condition II in \cite{pipira2018long_range} where the autocovariance function $\gamma$ satisfies \eqref{eq:intro:long_memory}. 
\end{Rq}

In this context, the spectral gap condition on autocovariance matrices of a long memory stationary process is ensured by the following theorem:
\begin{Th}\label{th:main_toepl} Suppose that $c=(c_h)_{h\in\Z}$ is an even ($c_h=c_{-h}$ for all $h\in\Z$) regularly varying sequence of index $\rho\in(-1,0)$, then there exist positive numbers $a_1^{(\rho)}>a_2^{(\rho)}\ge a_3^{(\rho)} \ge \dots >0$ such that for any fixed $k\ge 1$,
$$\lim_{N\to\infty}\frac{\lambda_k(T_N(c))}{Nc_N}=a_k^{(\rho)}.$$
In particular, 
$$
\lambda_k(T_N(c))\sim a_k^{(\rho)} Nc_N\to \infty\qquad \textrm{and}\qquad  \lim_{N\to\infty}\frac{\lambda_2(T_N(c))}{\lmax(T_N(c))}=\frac{a_2^{(\rho)}}{a_1^{(\rho)}}\, <\, 1\, .
$$
\end{Th}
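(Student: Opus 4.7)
My plan is to identify the rescaled matrix $B_N := T_N(c)/(Nc_N)$ with a finite-dimensional approximation of a compact, self-adjoint, positive integral operator $\mathcal K$ on $L^2([0,1])$, and then transfer spectral convergence to individual eigenvalues via Courant--Fischer. Define $\mathcal K$ by the kernel $K(x,y) := |x-y|^\rho$ and set $a_k^{(\rho)} := \mu_k(\mathcal K)$, the $k$-th largest eigenvalue of $\mathcal K$.

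First I would establish the basic properties of $\mathcal K$. Since $\rho\in(-1,0)$, Schur's test gives boundedness (because $\sup_x\int_0^1 |x-y|^\rho \dd y<\infty$); symmetry of the kernel gives self-adjointness; the Riesz kernel $|x|^\rho$ has positive Fourier transform on $\R$ (as a tempered distribution), giving positive semidefiniteness on $L^2([0,1])$; and compactness follows by approximating $\mathcal K$ with the Hilbert--Schmidt operators $\mathcal K_\delta$ defined by the truncated kernels $|x-y|^\rho\ind_{|x-y|>\delta}$, using Schur's test once more to get $\|\mathcal K-\mathcal K_\delta\|\le 2\delta^{\rho+1}/(\rho+1)\to 0$ as $\delta\to 0$.

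The heart of the argument is to prove $\lambda_k(B_N)\to a_k^{(\rho)}$. The key computation is that for $f,g\in C([0,1])$ and the discretizations $v_N(f)\in\C^N$ with $v_N(f)_i := f(i/N)/\sqrt N$, rearranging by $h=i-j$ yields
\begin{equation*}
\langle v_N(f), B_N v_N(g)\rangle
\,=\, \frac{1}{Nc_N}\sum_{h}c_h\cdot\frac{1}{N}\sum_{j}f\!\left(\tfrac{j+h}{N}\right)\overline{g\!\left(\tfrac{j}{N}\right)}
\,\xrightarrow[N\to\infty]{}\,\langle f, \mathcal K g\rangle.
\end{equation*}
Indeed, both sums are Riemann sums; substituting $u=h/N$ and using regular variation $c_h/c_N\sim|h/N|^\rho$ produces the limit $\int_{-1}^{1}|u|^\rho\int_0^1 f(x+u)\overline{g(x)}\dd x\dd u = \langle f,\mathcal K g\rangle$. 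The slowly-varying factor is tamed by Potter's inequality: for any small $\varepsilon>0$ with $\rho-\varepsilon>-1$ one has $c_h/c_N\le C(|h|/N)^{\rho-\varepsilon}$ uniformly for $1\le |h|\le N$, providing an integrable dominant $|u|^{\rho-\varepsilon}$ for dominated convergence. Combined with the uniform bound $\|B_N\|\le C$ (Schur's test on $B_N$, using $\sum_h c_h\sim Nc_N/(1+\rho)$ from Karamata's theorem), Courant--Fischer then delivers both bounds: the lower bound $\liminf \lambda_k(B_N)\ge a_k^{(\rho)}$ by testing on the $k$-dimensional subspace spanned by $v_N(\phi_1),\dots,v_N(\phi_k)$ for smooth $L^2$-approximations $\phi_i$ of the top $k$ eigenfunctions of $\mathcal K$, and the matching upper bound by the dual min--max with trial subspace the orthogonal complement in $\C^N$ of $v_N(\phi_1),\dots,v_N(\phi_{k-1})$.

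Strict inequality $a_1^{(\rho)}>a_2^{(\rho)}$ follows from Krein--Rutman: the kernel $|x-y|^\rho$ is strictly positive for $x\neq y$, so $\mathcal K$ maps every nonzero nonnegative $f\in L^2$ to a pointwise strictly positive function; combined with compactness and self-adjointness this forces the top eigenvalue to be simple with a strictly positive dominant eigenfunction. Together with $Nc_N=N^{1+\rho}L(N)\to\infty$ (standard slow variation, \cite[Prop.~1.3.6(v)]{bingham1989regular}), this yields $\lmax(T_N(c))\sim a_1^{(\rho)}Nc_N\to\infty$ and $\lambda_2(T_N(c))/\lmax(T_N(c))\to a_2^{(\rho)}/a_1^{(\rho)}<1$. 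The main technical obstacle is the diagonal singularity of $K$, which is only mildly integrable and not square-integrable when $\rho\le-1/2$; this rules out direct Hilbert--Schmidt approximation of $\widetilde{\mathcal K}_N$ by $\mathcal K$ and forces the argument to route through strong operator convergence together with uniform norm bounds, with Potter's inequality serving as the essential tool for controlling $L(h)/L(N)$ uniformly in the critical regime $h/N\in(0,1]$.
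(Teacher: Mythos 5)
The gap is in the upper bound $\varlimsup_N\lambda_k(B_N)\le a_k^{(\rho)}$. Your lower bound is sound: the quadratic-form convergence $\langle v_N(f),B_Nv_N(g)\rangle\to\langle f,\mathcal K g\rangle$ on discretizations of continuous functions, together with convergence of the Gram matrices, feeds directly into the max--min side of Courant--Fischer. But the min--max side does not close with the information you have established. Taking $U_N=\mathrm{span}(v_N(\phi_1),\dots,v_N(\phi_{k-1}))$, you must bound $\langle v,B_Nv\rangle$ for an \emph{arbitrary} unit vector $v\in\C^N$ orthogonal to $U_N$; such a $v$ need not be close to the discretization of any continuous function, so the quadratic-form convergence says nothing about it, and orthogonality to the $v_N(\phi_i)$ does not imply near-orthogonality to the actual top eigenvectors of $B_N$ unless you already know those eigenvectors converge --- which is what is being proved. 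Indeed ``convergence on nice test vectors plus a uniform norm bound'' is genuinely insufficient for upper semicontinuity of eigenvalues: perturb the discretized operator by $c\,w_Nw_N^*$ with $w_N=N^{-1/2}(1,-1,1,\dots)^\tran$ and $c>a_1^{(\rho)}$; every hypothesis you use (form convergence on the $v_N(f)$, uniform operator norm bound, even strong convergence) persists, yet $\lambda_1\ge c$. Your closing remark that the singular diagonal ``forces the argument to route through strong operator convergence together with uniform norm bounds'' concedes exactly the tool that cannot finish the job.

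What is needed --- and what the paper proves --- is convergence in \emph{operator norm}. The paper first uses Widom--Shampine's lemma to identify the nonzero eigenvalues of $T_N(c)/(Nc_N)$ exactly with those of the integral operator $\K_N^{(\rho)}$ with kernel $R(\lfloor Nx\rfloor-\lfloor Ny\rfloor)/R(N)$ on $L^2(0,1)$, and then shows $\|\K_N^{(\rho)}-\K^{(\rho)}\|_2\to0$ by a Schur-test/Riesz--Thorin bound applied to the \emph{difference} kernel: off the strip $|x-y|\le\varepsilon$, the uniform convergence theorem for regularly varying functions gives a uniform $\varepsilon$-bound, while on the strip Karamata's theorem (the same asymptotic $\int_0^yR\sim yR(y)/(1+\rho)$ you invoked only to bound $\|B_N\|$) bounds both kernels' contributions by $O(\varepsilon^{1+\rho})$ uniformly in $y$. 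This $L^1$-in-$x$, uniform-in-$y$ estimate is precisely what sidesteps the Hilbert--Schmidt obstruction for $\rho\le-1/2$ that you correctly identify. Norm convergence then gives $|\lambda_k(\K_N^{(\rho)})-a_k^{(\rho)}|\le\|\K_N^{(\rho)}-\K^{(\rho)}\|_2$ for all $k$ simultaneously, and the remainder of your argument (Krein--Rutman-type simplicity of $a_1^{(\rho)}$ with a positive eigenfunction, and $Nc_N\to\infty$) matches the paper and is fine. To repair your proof, either assemble your Potter/Karamata estimates into this norm bound on the difference kernel, or prove collective compactness of $\{\K_N^{(\rho)}\}$, which requires the same near-diagonal equi-integrability; without one of these, the upper bound step fails.
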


\begin{Rq} \label{rem:non-trivial} The fact that $\lambda_k(T_N(c))\to\infty$ for any fixed $k\ge 1$ underlines the difference between the model under study and the models studied in \cite{bai2008central}, \cite{bai2012sample} and \cite{cai2017limiting} where it is always assumed that there is a finite number of eigenvalues of the population covariance matrix that are spiked \cite{bai2008central,bai2012sample} or go to infinity \cite{cai2017limiting}. 
\end{Rq}

\begin{defi} \label{defi:circ_sym_gaus_proc} We say that a complex Gaussian process $(\Xc_t)_{t\in\Z}$ is circularly symmetric if for any $N\in\N$ the Gaussian vector $\Xc_{1:N}:=(\Xc_1,\dots,\Xc_N)^\tran$ is circularly symmetric, i.e. for any $\phi\in\R$, the vector $e^{i\phi}\Xc_{1:N}$ has the same distribution as $\Xc_{1:N}$.
\end{defi}

\begin{Rq}
Notice in particular that such a process is centered and satisfies that $\E \Xc_{1:N}\Xc_{1:N}^\tran=0$, for all $N\ge 1$.
\end{Rq}
As a canonical example, a standard complex Gaussian vector $X=(X_1,\cdots, X_N)^\tran$ where the $X_i$'s are i.i.d. and 
${\mathcal N}_\C(0,1)$-distributed is circularly symmetric with 
$$
\E X X^* = I_N\quad \textrm{and}\quad \E X X^\tran =0\, .
$$
For such a vector $X$, we will denote $X\sim{\mathcal N}_{\C}(0,I_N)$.

As a by-product of the above theorems, we have the following result on the largest eigenvalue of sample covariance matrices of a Gaussian long memory stationary
process (recall that such a process admits a real autocovariance function).

\begin{corol}\label{th:process}Suppose that $(\Xc_t)_{t\in\Z}$ is a real centered (resp. complex circularly symmetric) Gaussian stationary process with long range dependence in the sense of definition \ref{defi:long_mem}. Let
$$Q_N=\frac{1}{n}X_NX_N^*$$
where $X_N$ are $N\times n$ random matrices whose columns are i.i.d copies of the random vector $\Xc_{1:N}=(\Xc_1,\dots,\Xc_N)^\tran$. Then 
\begin{equation}\frac{\lmax(Q_N)}{\lmax(T_N)}\, \xrightarrow[N,n\to\infty]{a.s.}\,  1  \label{eq:proc_assym}\end{equation}
and 
\begin{equation}\sqrt{n}\left(\frac{\lmax(Q_N)}{\lmax(T_N)}-1-\frac{1}{n}\sum_{k=2}^{N}\frac{\lambda_k(T_N)}{\lmax(T_N)-\lambda_k(T_N)}\right)
\,\xrightarrow[N,n\to\infty]{\mathcal D}\, \gNr\left(0,\sigma^2\right), \label{eq:proc_fluct}\end{equation}
where $T_N=T_N(\gamma)$ is the autocovariance matrix of the process defined in \eqref{eq:autocov} and where 
$$
\sigma^2~=~\frac{\mathbb{E} |\Xc^4_{1}|}{\gamma^2(0)} ~-~1= \left\{ 
\begin{array}{ll}
2&\textrm{if}\ \Xc_1\ \textrm{is real}\\
1&\textrm{if}\ \Xc_1\ \textrm{is complex}\\
\end{array}
\right.\, .
$$
\end{corol}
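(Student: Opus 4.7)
The plan is to cast $Q_N$ as an instance of the matrix model $S_N$ in \eqref{eq:model_main} with $\Gamma_N = T_N$ and then invoke Theorems \ref{th:main_asympt}, \ref{th:main_fluct}, and \ref{th:main_toepl}. Since $(\Xc_t)$ is a (real or complex circularly symmetric) Gaussian process with covariance $T_N$, each column of $X_N$ is distributed as $T_N^{1/2} z$ where $z$ is a standard Gaussian vector in the corresponding field. For long-memory processes the spectral measure has density blowing up at the origin (hence $T_N$ is positive definite), so I may set $Z_N := T_N^{-1/2} X_N$; this $Z_N$ has i.i.d. standard Gaussian entries, lives on the common probability space supporting $(X_N)$, and gives $Q_N = \frac{1}{n} T_N^{1/2} Z_N Z_N^* T_N^{1/2}$, exactly of the required form. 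Moreover, the autocovariance function is real (see the remark after Definition \ref{defi:long_mem}), so $T_N$ is real symmetric.

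Next I would verify the hypotheses of the main theorems. Assumption \ref{ass:model_setting} is immediate from the construction. For \ref{ass:asympt_spec_st}, tightness of $(\mu^{T_N})$ follows from Markov's inequality and the identity $\tr(T_N) = N\gamma(0)$, which gives $\mu^{T_N}([M,\infty)) \le \gamma(0)/M$ uniformly in $N$; the divergence $\lmax(T_N) \to \infty$ is obtained from Theorem \ref{th:main_toepl} combined with $N\gamma(N) \to \infty$, a standard property of regularly varying sequences of index $\rho > -1$ (see \cite[Prop.~1.3.6(v)]{bingham1989regular}). The spectral gap assumption \ref{ass:separation} is supplied directly by Theorem \ref{th:main_toepl}. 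Theorem \ref{th:main_fluct} then applies via clause (ii) in the complex case, or clause (iii) in the real case (the latter using that $T_N$ is real symmetric).

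With the hypotheses verified, the Gaussian clause of Theorem \ref{th:main_asympt} yields the a.s. convergence \eqref{eq:proc_assym}, and Theorem \ref{th:main_fluct} yields the CLT \eqref{eq:proc_fluct} with variance $\sigma^2 = \E|Z_{1,1}|^4 - 1$. A direct fourth-moment calculation then identifies $\sigma^2$ with the announced value: in the real case $\E Z_{1,1}^4 = 3$, giving $\sigma^2 = 2$; in the complex circularly symmetric case $|Z_{1,1}|^2$ is exponential with mean $1$, so $\E|Z_{1,1}|^4 = 2$ and $\sigma^2 = 1$. These match $\E|\Xc_1|^4/\gamma^2(0) - 1$ via the corresponding moment identities for real Gaussians and circularly symmetric complex Gaussians of variance $\gamma(0)$. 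The proof is essentially bookkeeping, since all the analytical weight has been transferred to the previously established theorems. The only mild subtlety is the common-probability-space setup used for the a.s. statement; alternatively one can first invoke Gaussian rotational invariance to diagonalize $T_N$, which makes Assumption \ref{ass:diagonal} automatic and sidesteps this point entirely.
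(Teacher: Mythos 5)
Your route is the paper's route: represent each column of $X_N$ as $T_N^{1/2}$ times a standard Gaussian vector, check \ref{ass:model_setting}, \ref{ass:asympt_spec_st} (tightness via $\tr T_N=N\gamma(0)$ is fine, and $\lmax(T_N)\to\infty$ via Theorem~\ref{th:main_toepl} and $N\gamma(N)\to\infty$), get the spectral gap from Theorem~\ref{th:main_toepl}, and then invoke the Gaussian clauses of Theorems~\ref{th:main_asympt} and \ref{th:main_fluct} together with the elementary fourth-moment computation for $\sigma^2$. All of that matches the paper's proof.

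The one step where you diverge is also the one step where your justification does not hold up: you obtain the representation $Z_N=T_N^{-1/2}X_N$ by asserting that ``the spectral measure has density blowing up at the origin, hence $T_N$ is positive definite.'' Definition~\ref{defi:long_mem} only requires the autocovariance to be regularly varying of index $\rho\in(-1,0)$; it does not grant the existence of a spectral density (the paper treats the density-based definition \eqref{eq:defi_long_den} as a separate notion in Section~\ref{sec:further_app}, needing quasi-monotonicity to relate the two), so invertibility of $T_N$ is not established by your argument. The paper deliberately avoids this issue: when $T_N$ is singular it builds $Z_N$ by inverting $T_N$ on its range and filling the kernel directions with an independent standard Gaussian $Y$, checking $\cov(X-T_N^{1/2}Z_N)=0$, which yields the a.s.\ identity $Q_N=\frac1n T_N^{1/2}Z_NZ_N^*T_N^{1/2}$ on the original probability space in full generality. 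Your gap is repairable in two ways: either prove positive definiteness directly (if $v^*T_Nv=0$ for $v\neq0$, the symbol $\alpha$ would be supported on at most $N-1$ points, forcing $\gamma(h)\not\to0$, contradicting $\rho<0$), or simply adopt the paper's augmentation construction. Note also that your fallback (``diagonalize $T_N$ so that \ref{ass:diagonal} is automatic'') does not by itself sidestep the issue: you still need to write the data as $D_N^{1/2}\times$(standard Gaussian), which runs into the same singularity question, and for the almost sure statement \eqref{eq:proc_assym} an equality in distribution for each fixed $N$ only suffices because the Gaussian case of Theorem~\ref{th:main_asympt} is proved via per-$N$ concentration and Borel--Cantelli — a point worth making explicit if you take that route.
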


Corollary~\ref{th:process} being an easy consequence of Proposition~\ref{prop:main_asympt} and Theorems \ref{th:main_fluct} and \ref{th:main_toepl}, we provide its proof hereafter.

\begin{proof} Let $X$ be a centered $N$-dimensional random vector either real or circularly symmetric complex gaussian with (real) covariance matrix $T$. Then $X$ writes $X=T^{1/2} Z$ where $Z\sim {\mathcal N}_{\R}(0,I_N)$ or ${\mathcal N}_{\C}(0,I_N)$ depending on whether ${X}$ is real or complex. In fact, if $T$ is invertible 
then $Z=T^{-1/2}X$ has the required properties. 

If not, $T=O\diag(d_1^2,\cdots,d_p^2,0\cdots) O^\tran$ with $O$ orthogonal and $d_i>0$. Let $Y=(0,\dots,0,Y_{p+1},\dots,Y_N)^\tran$ with $Y_k$  i.i.d. standard Gaussian random variables, either real or complex (depending on $X$), and independent from $X$. Let 
$$
Z=O\diag(d_1^{-1},\cdots, d_p^{-1},0\cdots)O^\tran X + O\, Y
$$
then $\cov(Z)=I_N$ and if $X$ is complex, then $\E Z Z^\tran=0$. In particular, $Z$ is a standard gaussian random vector and a covariance computation yields
$$
\cov(X-T^{1/2}Z)=0\, ,
$$
which implies that $X=T^{1/2} Z$ almost surely.

Then for any $N$ and almost surely, the representation $Q_N=\frac 1n T_N^{1/2} Z_N Z_N^* T_N^{1/2}$ holds with $(T_N)$ the autocovariance matrices of process $(\Xc_t)_{t\in\Z}$. In Section \ref{sec:intr} we have noticed that $\mu^{T_N}$ converges weakly, and by Theorem~\ref{th:main_toepl}, $\lmax(T_N)\to\infty$ and $(T_N)$ satisfy the spectral gap condition. By Proposition \ref{prop:main_asympt} and Theorem \ref{th:main_fluct}, the results follow. 

\end{proof}

%

\section{Additional applications and simulations}
\label{sec:miscellany}
\subsection{Additional applications to Gaussian stationary processes}
\label{sec:further_app}
Although Definition~\ref{defi:long_mem} is a common definition of long memory, it can seem restrictive as it requires that the autocovariance function has at most a finite  number of nonpositive values.
In the first example hereafter we consider Gaussian processes with autocovariance functions either complex or with alternate signs. In the two subsequent examples, we relate our results with other definitions of long memory, via linear representation or via the autocovariance density.


\subsubsection*{Covariance matrices with alternating signs of entries.} 
Let $(\Xc_t)_{t\in\Z}$ be a centered Gaussian stationary process with autocovariance
function $\gamma^\Xc$. Let $\theta\in (-\pi,\pi],\, \theta \neq 0$ be fixed and consider the process $(\Yc_t=e^{\ii t\theta}\Xc_t)_{t\in\Z}$. This process is a Gaussian stationary process with autocovariance function $\gamma^{\Yc}(t)=e^{\ii t\theta} \gamma^{\Xc}(t)$ and 
$$
\Yc_{1:N} = \Sigma^{\theta} \Xc_{1:N}
$$
where $\Sigma^{\theta}=\diag(e^{\ii k\theta}, 1\le k\le N)$ is a unitary matrix. Notice that if $(\Xc_t)$ is complex circularly symmetric, then so is $(\Yc_t)$ but if $(\Xc_t)$ is real 
then $(\Yc_t)$ is either complex Gaussian but not circularly symmetric if $\theta\neq\pi$ or real Gaussian with alternate signs if $\theta=\pi$. 

Let $X^{\Xc}_N$ (resp. $X^{\Yc}_N$) a $N\times n$ matrix whose columns are i.i.d. copies of the vector $\Xc_{1:N}$
(resp. $\Yc_{1:N}$) and assume that the process $(\Xc_t)$ fulfills the assumptions of Corollary \ref{th:process}. Then 
$$
Q_N^{\Yc}=\frac 1n X_n^{\Yc} (X_n^{\Yc})^*=\frac 1n \Sigma^{\theta} X_n^{\Xc} (X_n^{\Xc})^* (\Sigma^{\theta})^*
= \Sigma^{\theta} Q_N^{\Xc} (\Sigma^{\theta})^*\qquad \textrm{where} \qquad Q_N^{\Xc}=\frac 1n X_n^{\Xc} (X_n^{\Xc})^*\, .
$$
In particular, $\lmax(Q^{\Yc}_N)=\lmax(Q^{\Xc}_N)$ satisfies \eqref{eq:proc_assym} and \eqref{eq:proc_fluct}.  
In this example the positivity constraint of the autocovariance function is relaxed.

%

\subsubsection*{Linear processes with long memory.}
If the process $(\Xc_t)_{t\in\Z}$ has a linear representation
$$\Xc_t=\sum_{j=0}^\infty \psi_j\epsilon_{t-j}$$
where $(\epsilon_t)_{t\in\Z}$ is a sequence of i.i.d real valued standard Gaussian r.v.'s and $\psi_j\sim j^{d-1}L(j)$ as $j\to\infty$ with $d\in(0,1/2)$ and $L$ a slowly varying function at $\infty$, then it is well known (c.f. for example \cite[Corollary~2.2.10]{pipira2018long_range}) that its autocovariance function $\gamma$ is regularly varying with index $\rho=2d-1$, and more precisely we have 
$$\gamma(h)\sim h^{2d-1}L^2(h)B(1-2d,d)$$
where $B(1-2d,d)=\int_0^1 x^{-2d}(1-x)^{d-1}\dd x$ is the beta-function. Corollary \ref{th:process} can be applied in this case.

\subsubsection*{Long range dependence defined through spectral density}
Among the various definitions of long range dependence, there is an important one which defines the long range dependence through the spectral density, that is, if the symbol of the autocovariance matrices $T_N$ has a density $f:(-\pi,\pi]\rightarrow \R^+$ satisfying
\begin{equation}
f(x)=|x|^{-2d}L\left (\frac{1}{|x|}\right), \quad x\in (-\pi,0)\cup(0,\pi], \label{eq:defi_long_den}
\end{equation}
with $d\in(0,1/2)$, and $L$ a slowly varying function defined on $[1/\pi,+\infty)$. (cf. Condition IV in \cite{pipira2018long_range}).
If a (real centered or complex circularly) Gaussian stationary process $(\Xc_t)_{t\in\Z}$ has long memory in this sense, then the LSD of the covariance matrices $T_N[f]$ is not compactly supported and in particular $\lmax(T_N[f])\to\infty$ and \eqref{eq:proc_assym} in Corollary~\ref{th:process} holds. Moreover if $L$ in \eqref{eq:defi_long_den} is quasi-monotone, then the process is also a long memory process in the sense of Definition \ref{defi:long_mem} (see for instance \cite[Corollary~2.2.17]{pipira2018long_range}) with index $\rho=2d-1$. More precisely we have 
$$\gamma(h)\sim 2h^{2d-1}L(h)\Gamma(1-2d)\sin(d\pi) \text{ as } h\to\infty$$
where $\Gamma(t):=\int_0^\infty x^{t-1}e^{-x}\dd x$ denotes the gamma-function. Hence by Theorem~\ref{th:main_toepl}, $T_N$ satisfies the spectral gap condition, and applying Theorem~\ref{th:main_fluct} we get the same result as Corollary~\ref{th:process}.

\subsection{Numerical simulations}
\label{sec:simulations}
\subsubsection*{Simulation 1: Limiting behaviour of $\lmax(S_N)$.}To illustrate Proposition~\ref{prop:main_asympt}, we take 
$$S_N=\frac{1}{n}\Gamma_N^\frac{1}{2}Z_NZ_N^\tran \Gamma_N^\frac{1}{2}$$
with $Z_N$ a $N\times n$ matrix having i.i.d standard real Gaussian entries, and $\Gamma_N=T_N(\gamma)$ is the Toeplitz matrix dertermined by $\gamma(h)=\frac{1}{(1+|h|)^{3/4}}$. Let $N$ take all the values in the finite sequence $\{100,150,200,\dots,3000\}$, and let $n=\lfloor 5N/4\rfloor$. We plot the simulation results in Figure~\ref{fig:con_as}.

\begin{figure}[htbp]
\centering
\subfigure[]{\includegraphics[width=7cm]{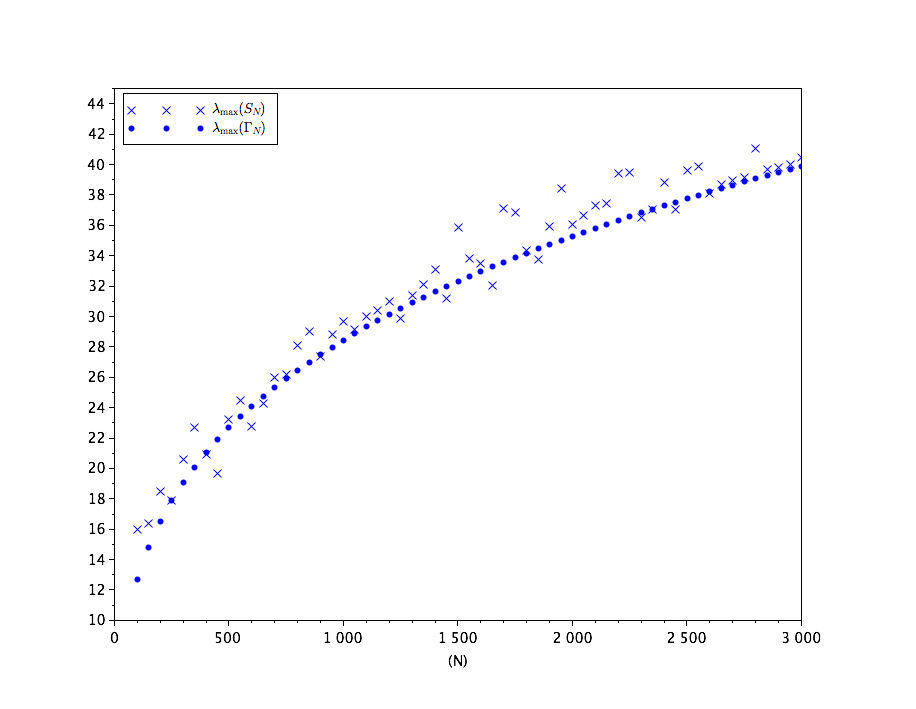}\label{fig:con_as_a}}
\subfigure[]{\includegraphics[width=7cm]{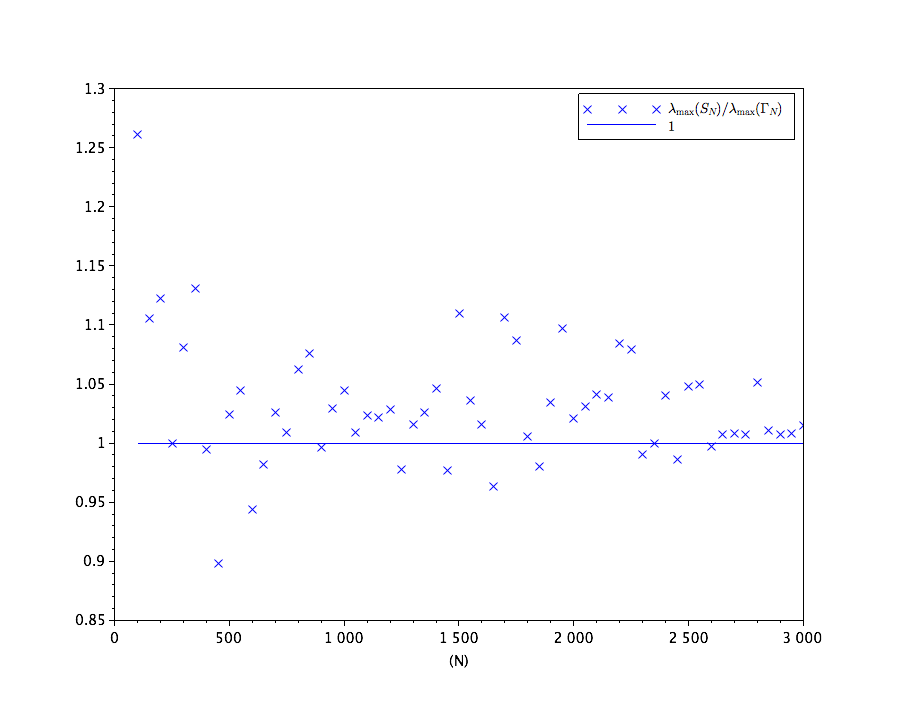}\label{fig:con_as_b}}
\caption{Convergence of ${\lmax(S_N)}/{\lmax(\Gamma_N)}$ to $1$. In \ref{fig:con_as_a}, the values $\lmax(S_N)$ and $\lmax(\Gamma_N)$ are plotted as crosses and solid points respectively; in \ref{fig:con_as_b} the values of the ratio ${\lmax(S_N)}/{\lmax(\Gamma_N)}$ are plotted as crosses, compared with the constant $1$. 
}
\label{fig:con_as}
\end{figure}

\subsubsection*{Simulation 2: Fluctuations of $\lmax(S_N)$.}To illustrate the fluctuations of $\lmax(S_N)$, we fix $N=1000$ and $n=1250$ and let $\Gamma_N=\diag(\lambda_k(T_{1000}(\gamma)))$ with $\gamma$ as in the previous simulation. We take $900$ independant samples of $S_{1000}$, plot the histogram of $F_{1000}$ defined in \eqref{eq:def_FN} and compare with the density of the theoretical limiting law. In Figure~\ref{fig:fluct_a} we simulate the model $S_{1000}$ with $Z_{1000}$ having i.i.d. real Gaussian entries, the limiting law, according to Theorem~\ref{th:main_fluct}, is $\gNr(0,2)$; while in Figure~\ref{fig:fluct_b}, $Z_{1000}$ has i.i.d. standardized exponential entries, i.e. $Z_{i,j}^{(1000)}\sim \mathcal{E}(1)-1$. The limiting law is $\gNr(0,8)$.

\begin{figure}[htbp]
\centering
\subfigure[]{\includegraphics[width=7cm]{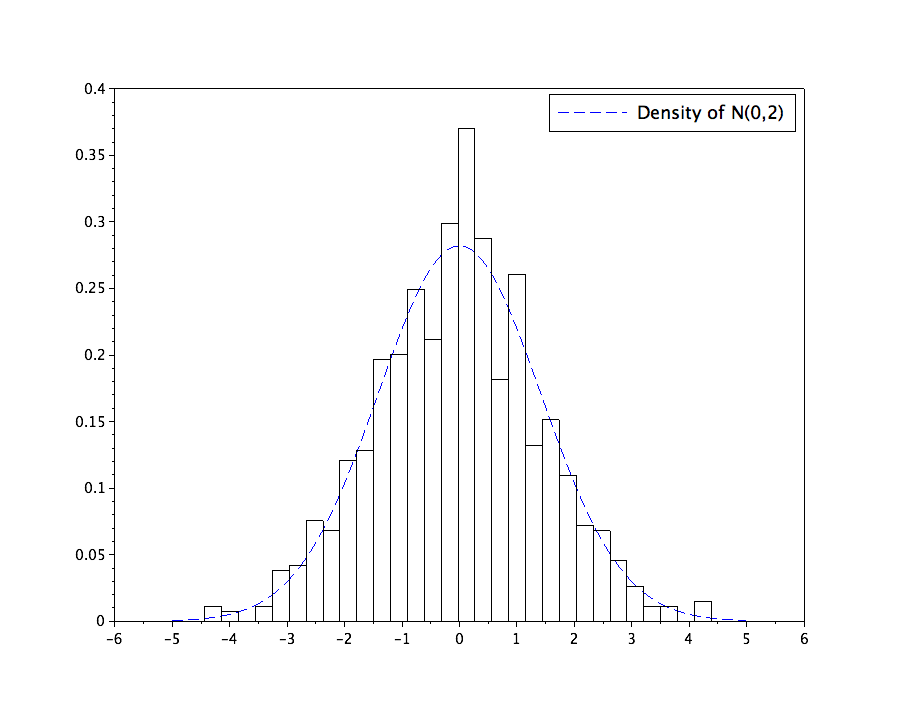}\label{fig:fluct_a}}
\subfigure[]{\includegraphics[width=7cm]{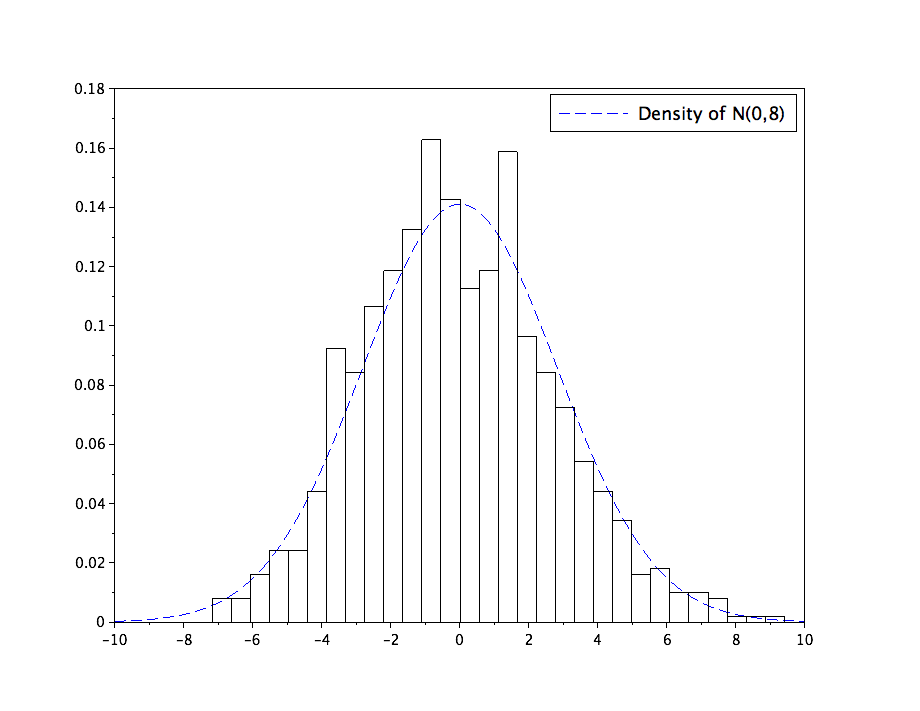}\label{fig:fluct_b}}
\caption{Fluctuations of $\lmax(S_{1000})$, with $Z_{1000}$ having Gaussian entries in \ref{fig:fluct_a} and standardized exponential entries in \ref{fig:fluct_b}. Simulation with $900$ samples.}
\end{figure}

\subsubsection*{Simulation 3: Concentration.} We now address the case $\E|Z_{1,1}^{(N)}|^4=1$. Consider a matrix $Z_N$ with i.i.d. symmetric Bernoulli variables taking values in $\{-1,1\}$. As previously we take $\Gamma_N=\diag(\lambda_k(T_{N}(\gamma)))$ with $\gamma(h)=\frac{1}{(1+|h|)^{3/4}}$. In this case, Theorem \ref{th:main_fluct} asserts that 
$$
\sqrt{n}\left(\frac{\lmax(S_N)}{\lmax(\Gamma_N)}-1-\frac{1}{n}\sum_{k=2}^{N}\frac{\lambda_k(\Gamma_N)}{\lmax(\Gamma_N)-\lambda_k(\Gamma_N)}\right)
\xrightarrow[N,n\to\infty]{\mathcal P} 0\, . 
$$
In Figure~\ref{fig:fluct_bin_a} we plot the fluctuations of $\lmax(S_{1000})$ with $n=1250$ and notice that the corresponding $F_{1000}$ are far more concentrated around $0$ than the previous simulations, as predicted by the theorem. 

An interesting phenomenon occurs in Figure~\ref{fig:fluct_bin_nondiag}, where the same matrix $Z_{1000}$ is considered while we do not diagonalize $\Gamma_{1000}$ and just take $\Gamma_{1000}=T_{1000}(\gamma)$. In this case, the concentration phenomenon disappears, and the obtained histogram is very close to that in Figure~\ref{fig:fluct_a}. This simulation suggests that some universality holds in the fluctuations of $\lmax(S_N)$ if the $(\Gamma_N)$'s are Toeplitz matrices. This will be explored in a forthcoming work.
\begin{figure}[htbp]
\centering
\subfigure[]{\includegraphics[width=7cm]{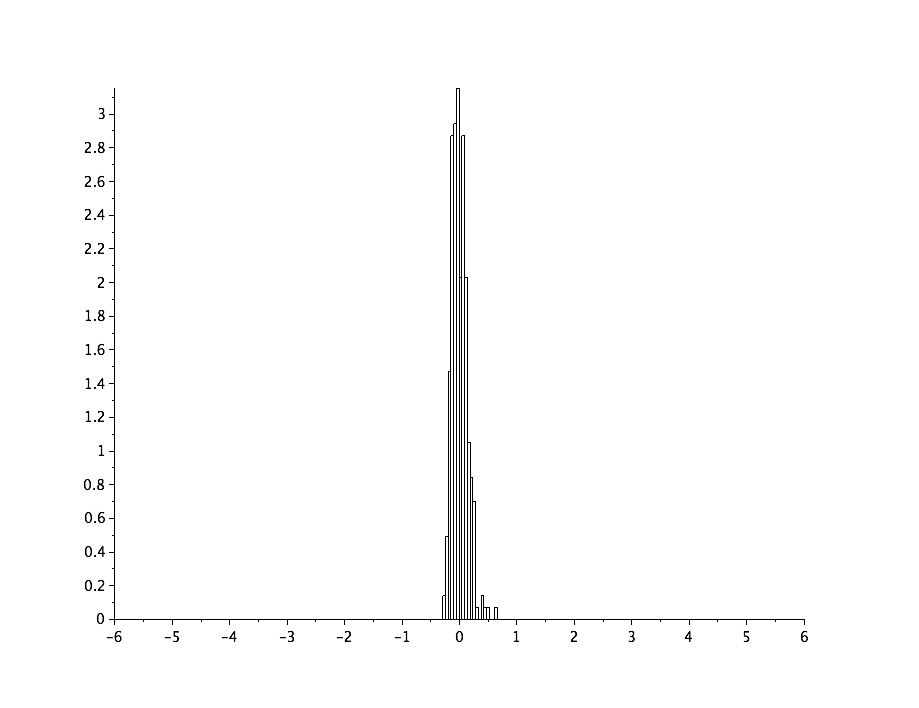}\label{fig:fluct_bin_a}}
\subfigure[]{\includegraphics[width=7cm]{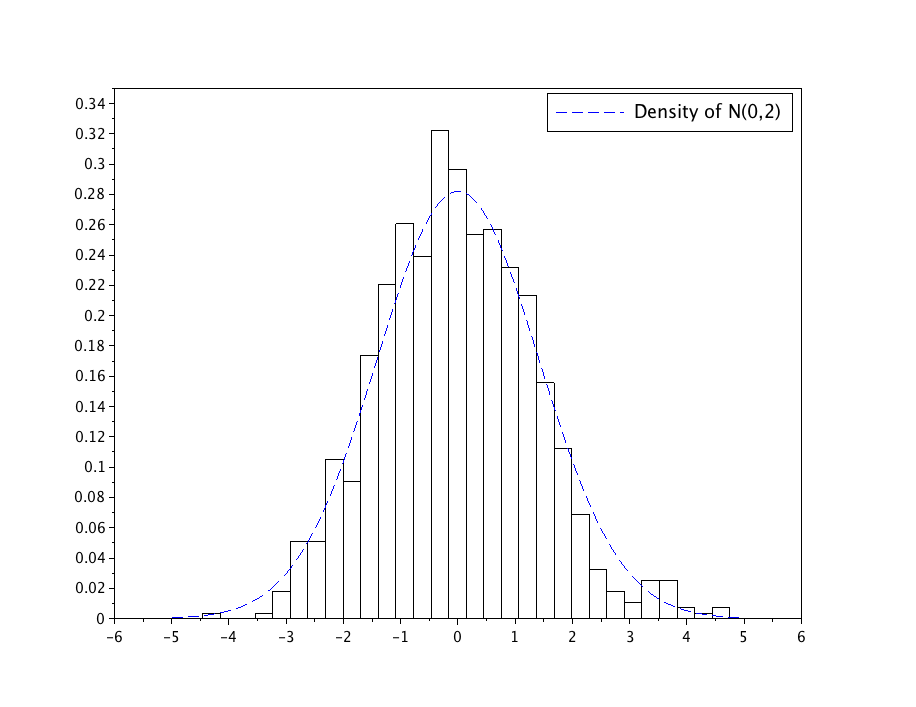}\label{fig:fluct_bin_nondiag}}
\caption{Fluctuations of $\lmax(S_N)$ in the case of symmetric Bernoulli entries. $\Gamma_N$ is diagonal in \ref{fig:fluct_bin_a} and nondiagonal in \ref{fig:fluct_bin_nondiag}.}
\end{figure}

\subsection{Open questions}

\subsubsection*{At the border between long memory and short memory.} An interesting regime is when $\rho=-1$. In this case, the autocovariance function $\gamma(h)=(1+|h|)^{-1}L(h)$ can be summable or not depending on the slowly varying function $L$. For example if $L(h)=\log^{-1-\varepsilon}(2+|h|)$ with $\varepsilon>0$ then $\gamma$ is absolutely summable and the process has short memory.  If $L(\cdot)= 1$ then one can prove that $\lmax(T_N(\gamma))=O(\log N)$, and that the spectral gap condition no longer holds. In this case, the asymptotics \eqref{eq:proc_assym} remains true as
Proposition~\ref{prop:main_asympt} does not rely on the spectral gap condition but only on the condition $\lmax(T_N(\gamma))\to \infty$. The question whether the fluctuations \eqref{eq:proc_fluct} together with their normalization and the limiting distribution hold remains open.

\subsubsection*{Non-Gaussian long memory stationary processes.} A Gaussian long memory stationary process admits a linear representation 
$\Xc_{1:N} = T^{1/2}_N(\gamma) Z_{1:N}$, where $T_N(\gamma)$ is a hermitian Toeplitz matrix and $Z_{1:N}$ is a standard Gaussian vector. This representation 
is key in the analysis of the top eigenvalue of the corresponding large covariance matrix of samples of the process but does not hold anymore if the process 
is not Gaussian. The question whether it is possible to perform the same eigenvalue analysis in the case of non-Gaussian long memory stationary process is open. 

\subsubsection*{Correlation structure of the top eigenvalues.} Beyond the top eigenvalue $\lmax(S_N)$, it would be interesting to understand the asymptotic correlation 
structure and the fluctuations of the (many) largest eigenvalues $(\lmax(S_N), \lambda_2(S_N),\cdots, \lambda_k(S_N))$ for a fixed $k\ge 1$. This question will be addressed in a forthcoming work \cite{tian-non-gaussian}. 

\subsubsection*{Behaviour of the eigenvectors associated to the top eigenvalues.} For bounded spiked models (by bounded we mean $\sup_N\| \Gamma_N\|<\infty$) the structure of the eigenvectors associated to the top eigenvalues has been studied and carries interesting information, see for example \cite{benaych2012singular}. A similar study would be interesting in the general context of unbounded population covariance matrices where $\lmax(\Gamma_N)\to \infty$ and of 
(Gaussian) long memory stationary processes. In this latter case, one needs to have a good understanding of the Toeplitz matrix' $T_N(\gamma)$ eigenvectors.    

\subsubsection*{Universality for non-Gaussian linear stationary processes with long memory.} In the case where $\Gamma_N$ is required to be (block-)diagonal, the variance of the limiting distribution depends on the fourth moment of the entries $Z_{i,j}^{N}$ and may be equal to zero if $\E|Z_{1,1}^{(1)}|^4=1$. 
However when $\Gamma_N$ is a Toeplitz matrix \eqref{eq:autocov} with $\gamma$ satisfying \eqref{eq:intro:long_memory}, this dependence is weakened and Simulation 3 in Section \ref{sec:simulations} strongly suggests that some universality occurs depending on the population covariance matrix $\Gamma_N$, see in particular Figures \ref{fig:fluct_bin_a} and \ref{fig:fluct_bin_nondiag}. This question will be addressed in \cite{tian-non-gaussian}.

\section{Proofs of Proposition~\ref{prop:main_asympt} and Theorem~\ref{th:main_fluct}}
\label{sec:proof_main}

\subsection{A short reminder of results related to large covariance matrices}

Given a probability measure $\mu$ on $\R$, define its Cauchy-Stieltjes transform as 
$$
m_{\mu}(z):=\int\frac{\dd\mu(s)}{z-s}, \quad \forall z\in\C^+:=\{z\in\C:\Im z>0\}
$$
Notice that $m_{\mu}(z)$ is the opposite of the Stieltjes transform $g_{\mu}(z)=\int\frac{\dd\mu(s)}{s-z}$.

For a random matrix $S_N$ given by \eqref{eq:model_main}, we will often consider its companion matrix 
\begin{equation}
\bS_N=\frac{1}{n}Z_N^*\Gamma_N Z_N, \label{eq:def_underS}
\end{equation}
which shares the same non-zero eigenvalues with $S_N$. In particular, $\lmax(\bS_N)=\lmax(S_N)$.  
Recall that
$r_N:=\frac Nn$ and let $\mu^{S_N}, \mu^{\bS_N}$ be the ESD of $S_N$ and $\bS_N$ respectively, then the following relation holds:
\begin{equation}
\mu^{\bS_N}=(1-r_N)\delta_0+r_N\mu^{S_N}. \label{eq:relation_mu_emp}
\end{equation}

\subsubsection*{Limiting spectral distribution.} We recall results from \cite[Theorem 1.1]{silverstein1995strong}. For any probability $\nu$ in $\nonnegR$ and any $r\in(0,+\infty)$, there exists a unique probability measure $\mu=\mu(r,\nu)$ whose Cauchy-Stieltjes transform $m_\mu$ satisfies the equation:
$$
m_\mu(z)=\int\frac{\dd\nu(s)}{z-s(1-r+rzm_\mu(z))}\qquad \textrm{for any}\ z\in \mathbb{C}^+\, .
$$
If the probability measure $\nu$ is the ESD $\mu^A$ associated with a matrix $A$, we simply write $\mu=\mu(r,A)$ instead of $\mu=\mu(r,\mu^A)$. 
Similarly, there exists a unique probability measure $\ulmu=\ulmu(r,\nu)$ with Cauchy-Stieltjes transform $m_{\ulmu}$ satisfying
\begin{equation}\label{eq:fixed-point}
z=\frac{1}{m_{\ulmu}(z)}+r\int\frac{s\dd\nu(s)}{1-sm_{\ulmu}(z)}\qquad \textrm{for any}\ z\in \mathbb{C}^+\, .
\end{equation}
As previously, we will write $\ulmu(r,A)$ instead of $\ulmu(r,\mu^A)$.
If moreover $\mu^{\Gamma_N}\xrightarrow[N\to\infty]{\mathcal D}\nu$, then
\begin{equation}\label{eq:conv-ESD}
\mu^{S_N}\xrightarrow[N,n\to\infty]{\mathcal D}\mu\quad \textrm{a.s.}\qquad \textrm{and}
\qquad 
\mu^{\bS_N}\xrightarrow[N;n\to \infty]{\mathcal D}\ulmu\quad \textrm{a.s.}
\end{equation}
\subsubsection*{Spectrum confinement.} By "spectrum confinement", we refer to the phenomenon where the empirical spectrum of the eigenvalues "concentrates" near the support of the limiting spectral distribution. In the specific case of model \eqref{eq:model_main} under assumption $\sup_N \| \Gamma_N\| <\infty$ and the convergence \eqref{eq:conv-ESD}, spectrum confinement can be roughly expressed (in the absence of spikes) as: for every $\varepsilon>0$, almost surely,
$$
\textrm{supp}(\mu^{S_N})\ \subset\  \textrm{supp}(\mu) + (-\varepsilon, \varepsilon)
$$
for $N$ large enough. 

A more accurate description of spectrum confinement relies on the deterministic equivalent of $\mu^{\bS_N}$ defined as $
\ulmu_N:=\ulmu(r_N,{\Gamma_N})
$ (cf. \eqref{eq:fixed-point} with $\nu=\mu^{\Gamma_N}$). Assume that \ref{ass:infarray} holds. By \cite[Theorem 1.1]{bai1998no}, if there exists $\varepsilon>0$ and an interval $[a,b]$ such that 
$$
[a,b] \cap \left( \textrm{supp}(\ulmu_N)+(-\varepsilon,\varepsilon) \right) = \emptyset
$$ 
for $N$ large enough, then almost surely 
\begin{equation}\label{eq:spectrum-confinement}
\textrm{supp}(\mu^{\bS_N})\ \cap \  [a,b] =\emptyset 
\end{equation}
for $N$ large enough. In particular, if $a>0$ there is no eigenvalue of $S_N$ in $[a,b]$ for $N$ large enough.

The description of the support of a probability distribution defined via a fixed-point equation \eqref{eq:fixed-point} is given in \cite[Theorems 4.1 and 4.2]{silverstein1995analysis}. Based on these results, we now state a necessary and sufficient condition for which a real number $x$ lies outside the support of $\ulmu_N=\ulmu(r_N,{\Gamma_N})$. Let 
\begin{equation}\label{def:BN}
B_N:=\{y\in\R: y\ne 0,\ y^{-1}\ne \lambda_k(\Gamma_N),\ \forall k=1,\dots,N\}\ ,
\end{equation}
and define
\begin{equation}
    x_N(y):=\frac{1}{y}+r_N\int\frac{s\,\dd \mu^{\Gamma_N}(s)}{1-sy}\qquad\textrm{for}\quad  y\in B_N. \label{eq:def_xN}
\end{equation}
A real number $x\in\R$ lies outside the support of $\ulmu_N$ 
if and only if 
$$
\exists\, y\in B_N,\quad x=x_N(y)\quad \textrm{and}\quad x_N'(y)=-\frac{1}{y^2}+r_N\int\frac{s^2\dd\mu^{\Gamma_N}(s)}{(1-sy)^2}\ <\ 0\ .
$$
\subsubsection*{Exact separation.} Let $[a,b]$ be an interval eventually outside the support of $\ulmu_N=\ulmu(r_N,{\Gamma_N})$, assume that $\mu^{\Gamma_N}\rightarrow \nu$ and let $\ulmu=\ulmu(r,\nu)$.  "Exact separation" is a phenomenon that expresses the fact that (almost surely and eventually) the interval $[a,b]$ separates the empirical eigenvalues of matrix $S_N$ exactly in the same proportions as $[1/m_{\ulmu}(a), 1/m_{\ulmu}(b)]$ separates those of matrix $\Gamma_N$. 

This expression has been coined in the article \cite{bai1999exact} by Bai and Silverstein, from which we recall the result of interest to us, that is mainly \cite[Theorem 1.2(2)]{bai1999exact}: Assume in addition to the assumptions of \cite[Theorem~1.1]{bai1998no} (and in particular to assumption \ref{ass:infarray}) that the conditions $m_{\ulmu}(b)>0$ 
and $r(1-\nu(\{0\}))\le 1$ hold.
For $N$ large enough, let $i_N$ be an integer such that 
$$
\lambda_{i_N}(\Gamma_N)>\frac 1{m_{\ulmu}(b)}\qquad\textrm{and}\qquad  \lambda_{i_N+1}(\Gamma_N)<\frac 1{m_{\ulmu}(a)}\, .
$$
Then almost surely,
$
\lambda_{i_N}(S_N)>b$ and $\lambda_{i_N+1}(S_N)<a$
for $N$ large enough. This result will be used in the particular case where $\nu=\delta_0$. In this case, $m_{\ulmu}(x)=\frac 1x$ and $r(1-\delta_0({0}))=0$.

\subsection{Reduction to the bounded model}
When studying $\lmax(S_N)$ under \ref{ass:asympt_spec_st}, the main difficulty is to handle the unboundedness of $\lmax(\Gamma_N)$. In order to circumvent this issue, we define 
$$
\tS_N=\frac{1}{n}\tGamma_N^{1/2}Z_NZ_N^*\tGamma_N^{1/2}\qquad \textrm{where}\quad \tGamma_N:=\frac{\Gamma_N}{\lmax(\Gamma_N)}\ .
$$
In particular, notice that $\lmax(\tS_N)=\frac{\lmax(S_N)}{\lmax(\Gamma_N)}$. Thus, in order to establish the results stated in Proposition \ref{prop:main_asympt} and Theorem \ref{th:main_fluct}, we only need to prove the corresponding results for $\tS_N$. 

Using the definition of $\tGamma_N$, the tightness of ($\mu^{\Gamma_N}$) and the fact that $\lmax(\Gamma_N)\to\infty$, we immediatly deduce the following properties for $\Tilde \Gamma_N$:
$$
\lmax(\tGamma_N)=1\quad \textrm{and}\quad \mu^{\tGamma_N}\xrightarrow[N\to\infty]{\mathcal D} \delta_0\, .
$$
In particular, the spectral norm of $\tGamma_N$ is bounded and many classical results, for instance those of Bai and Silverstein \cite{silverstein1995strong, bai1998no} can be applied to $\tS_N$. Considering this fact, we state and prove below Proposition \ref{prop:main_asympt_bis} and Theorem \ref{th:main_fluct_bis} which are the counterparts of Proposition \ref{prop:main_asympt} and Theorem \ref{th:main_fluct}.

\vspace{.3cm}
\begin{enumerate}[leftmargin=*, label={\bf A2(b)}]
    \item \label{ass:asympt_spec_st_bis} Given a sequence of $N\times N$ positive semidefinite deterministic matrices $\Gamma_N$, the following properties hold:
    $$
    \lmax(\Gamma_N)=1\quad\forall  N\ge 1\quad \textrm{and}\quad  \mu^{\Gamma_N}\xrightarrow[N\to\infty]{\mathcal D} \delta_0\, .
    $$
\end{enumerate}
\vspace{.3cm}
\begin{prop} \label{prop:main_asympt_bis}
Let $S_N$ be a $N\times N$ matrix given by \eqref{eq:model_main} and assume that \ref{ass:model_setting} and \ref{ass:asympt_spec_st_bis} hold. Then
\begin{equation}
\lmax(S_N)\xrightarrow[N,n\to\infty]{\mathcal P} 1. \label{eq:asympt_bis_cv}
\end{equation}
If moreover either the random variables $Z_{ij}^{(N)}$ are standard (real or complex) Gaussian or Assumption \ref{ass:infarray} holds, then the above convergence holds almost surely.
\end{prop}

Although the spectral norm of $\Gamma_N$ is assumed to be bounded while the limiting spectral measure is $\delta_0$, we cannot directly apply the results of \cite{bai2012sample} since we do not assume a clear separation between the spikes and non-spiked eigenvalues. Note that, as quoted in Remark \ref{rem:non-trivial}, all the non-normalized eigenvalues of a Toeplitz matrix converge to infinity at the same rate. Consequently, there is an infinite number of normalized  eigenvalues of a Toeplitz matrix which are generalized spikes in the sense of \cite{bai2012sample}.

\begin{Th}\label{th:main_fluct_bis}
Let $S_N$ be a $N\times N$ matrix given by \eqref{eq:model_main} and assume that \ref{ass:model_setting}, \ref{ass:asympt_spec_st_bis} and \ref{ass:separation} hold.
Assume moreover that one of the following conditions is satisfied:
\begin{enumerate}[label=(\roman*), itemsep=-3pt]
\item Assumption \ref{ass:diagonal} holds,\label{cond:diag}
\item The random variables $Z_{ij}^{(N)}$ are standard complex Gaussian,\label{cond:gauss-C}
\item The random variables $Z_{ij}^{(N)}$ are standard real Gaussian and matrices $\Gamma_N$ are real symmetric, \label{cond:gauss-R}
\end{enumerate}
then
\begin{equation}
    \sqrt{n}\left(\lmax(S_N)-1-\frac{1}{n}\sum_{k=2}^{N}\frac{\lambda_k(\Gamma_N)}{1-\lambda_k(\Gamma_N)}\right)\quad 
    \xrightarrow[N,n\to\infty]{\mathcal D}\quad \gNr(0,\sigma^2), \label{eq:fluct_bis}
\end{equation}
where $\sigma^2=\E|Z_{1,1}^{(1)}|^4-1$.
\end{Th}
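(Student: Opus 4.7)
The plan is to derive and analyze a scalar self-consistent equation for $\lmax(S_N)$ via a Schur complement argument, then read off the Gaussian fluctuations from a quadratic-form CLT. In cases (ii) and (iii), the unitary (respectively orthogonal) invariance of the standard Gaussian distribution lets me diagonalize $\Gamma_N=UDU^*$ and replace $Z_N$ by $U^*Z_N\eqinlaw Z_N$, reducing to a diagonal $\Gamma_N$, which is a special instance of \ref{ass:diagonal}. In all three cases I therefore assume
$$\Gamma_N=\begin{pmatrix}1 & 0\\ 0 & \Gammasf_{N-1}\end{pmatrix},$$
with $\lmax(\Gammasf_{N-1})=\lambda_2(\Gamma_N)\le c<1$ eventually by \ref{ass:separation}, and partition $Z_N$ into its first row $z^*\in\C^{1\times n}$ and remaining rows $Y\in\C^{(N-1)\times n}$, independent because $\Gamma_N$ is block-diagonal.

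Since $S_N$ and its companion $\underline{S}_N=\frac{1}{n}Z_N^*\Gamma_N Z_N=\frac{1}{n}zz^*+B$ with $B:=\frac{1}{n}Y^*\Gammasf_{N-1}Y\in\C^{n\times n}$ share their nonzero eigenvalues, a rank-one Schur complement yields, for any $\lambda>\lmax(B)$,
$$\lambda\ \text{is a nonzero eigenvalue of }S_N\ \Longleftrightarrow\ \frac{1}{n}z^*(\lambda I_n-B)^{-1}z=1.$$
To apply this characterization near $\lambda=1$, I first show $\lmax(B)\le 1-\delta$ eventually with overwhelming probability: this combines $\lmax(\Gammasf_{N-1})<c<1$ with the spectrum-confinement result \cite[Theorem~1.1]{bai1998no} and a BBP-type analysis of $B$ seen as a sample covariance matrix whose population bulk $\mu^{\Gammasf_{N-1}}$ collapses to $\delta_0$. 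Then $\lmax(S_N)$ is the largest root of the scalar equation above.

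I rewrite the equation as $m_B(\lambda)+\mathcal{E}(\lambda)=1$, where $m_B(\lambda):=\frac{1}{n}\tr(\lambda I_n-B)^{-1}$ is the Stieltjes transform of $B$ and $\mathcal{E}(\lambda):=\frac{1}{n}[z^*(\lambda I_n-B)^{-1}z-\tr(\lambda I_n-B)^{-1}]$ is centered conditionally on $Y$. The key algebraic observation is that the deterministic equivalent $m_{\ulmu_N}$ of $m_B$, defined by \eqref{eq:fixed-point} with $\nu=\mu^{\Gammasf_{N-1}}$ and $r=r_{N-1}:=(N-1)/n$, satisfies $m_{\ulmu_N}(1+\beta_N)=1$ \emph{exactly}: substituting $m=1$ and $z=1+\beta_N$ in \eqref{eq:fixed-point} yields the identity $1+\beta_N=1+r_{N-1}\int s/(1-s)\,d\mu^{\Gammasf_{N-1}}(s)$, which is precisely the definition of $\beta_N$. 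Combined with the classical linear-spectral-statistic concentration $m_B-m_{\ulmu_N}=O_P(1/n)$ and the derivative bound $m_B'(1+\beta_N)=-1+o_P(1)$ (which follows from $\mu^B\cvweak\delta_0$), the implicit function theorem gives
$$\sqrt{n}\bigl(\lmax(S_N)-1-\beta_N\bigr)\ =\ \sqrt{n}\,\mathcal{E}(1+\beta_N)+o_P(1).$$
Conditionally on $Y$, the matrix $M:=((1+\beta_N)I_n-B)^{-1}$ satisfies $\sum_i|M_{ii}|^2=n(1+o_P(1))$ and $\tr(MM^*)=n(1+o_P(1))$, so a conditional CLT for quadratic forms delivers $\sqrt{n}\,\mathcal{E}(1+\beta_N)\cvweak\gNr(0,\sigma^2)$ with $\sigma^2=\E|Z_{1,1}^{(1)}|^4-1$.

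The main obstacle is the eventual bound $\lmax(B)\le 1-\delta$: although $\mu^{\Gammasf_{N-1}}\cvweak\delta_0$ and the BBP heuristic predicts $\lmax(B)\to\lim\lambda_2(\Gamma_N)<1$, rigorously controlling the top eigenvalue requires a uniform estimate on how many spikes of $\Gammasf_{N-1}$ may sit just below the spectral-gap threshold $c$. A secondary subtlety is the conditional CLT in case (i), where $z$ is non-Gaussian and $M$ is random; this is handled by a truncation together with a Lindeberg-type CLT conditional on $Y$.
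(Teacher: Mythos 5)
Your proposal follows essentially the same skeleton as the paper's proof: reduction of cases (ii)--(iii) to the block-diagonal case by Gaussian invariance, the rank-one Schur-complement equation $\frac1n z^*(\lambda I-B)^{-1}z=1$ above $\lmax(B)$, the exact identity $\underline{\mathsf m}(1+\beta_N)=1$ coming from the finite-$N$ deterministic equivalent, closeness of $\frac1n\tr(\cdot I-B)^{-1}$ to $\underline{\mathsf m}$, and a CLT for the centered quadratic form. The one genuine divergence is the last step: you invoke a conditional Lindeberg-type CLT for $z^*Mz-\tr M$ with the random kernel $M=((1+\beta_N)I-B)^{-1}$ and compute the limiting variance from $\sum_i|M_{ii}|^2\approx\tr(MM^*)\approx n$ (which is correct), whereas the paper uses the resolvent identity to split $\Upsilon(\eta_N)$ into $\frac1{\eta_N}\cdot\frac1n Z_{1,\cdot}Z_{1,\cdot}^*$ plus a term involving $(\eta_NI-\bS_{2:N,2:N})^{-1}\bS_{2:N,2:N}$ which is killed by the quadratic-form variance bound and $\mu^{\bS_{2:N,2:N}}\cvweak\delta_0$ (Proposition~\ref{prop:useful-estimates}(b)); this reduces everything to the i.i.d.\ sum $\frac1n\sum_j|Z_{1,j}|^2$ and the classical CLT, avoiding both the Lindeberg verification for a random kernel and the fourth-moment bookkeeping (e.g.\ the possibility $\E (Z^{(N)}_{1,1})^2\neq0$ for complex non-Gaussian entries under condition (i)).

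Two steps of your writeup are left as assertions and need to be addressed. First, the bound $\lmax(B)\le 1-\delta$ with probability tending to one, which you flag as the main obstacle, does not require any counting of near-threshold spikes: apply the Silverstein--Choi support characterization with the \emph{finite-$N$} measure $\mu^{\Gammasf_{N-1}}$, i.e.\ the map $\mathsf x_N$ of \eqref{eq:def_sfxN}, exactly as in the proof of Theorem~\ref{th:main_asympt_bis}; choosing $\eta$ with $1/\eta\in(d+\varepsilon/2,\,d+\varepsilon)$ gives $\mathsf x_N(\eta)\to 1/\eta<d+\varepsilon$, $\mathsf x_N'(\eta)\to-\eta^{-2}<0$ and monotonicity of $\mathsf x_N'$ on $(0,\eta)$, so $[d+\varepsilon,\infty)$ is eventually outside $\supp\ulmu(\mathsf r_N,\Gammasf_{N-1})$; since this deterministic equivalent is built from the actual eigenvalues of $\Gammasf_{N-1}$, eigenvalues sitting just below the gap threshold are automatically accounted for, and \cite[Theorem~1.1]{bai1998no} then yields $\P(\lmax(B)<d+\varepsilon)\to1$ (only convergence in probability is needed, so the coupling trick from the proof of Theorem~\ref{th:main_asympt_bis} dispenses with \ref{ass:infarray}); you also need the companion lower bound $\lmax(S_N)>1-\varepsilon$ w.h.p.\ (Theorem~\ref{th:main_asympt_bis}) to guarantee that the largest root of the scalar equation above $\lmax(B)$ exists and equals $\lmax(S_N)$. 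Second, the claim ``$m_B-m_{\ulmu_N}=O_P(1/n)$ is classical'' cannot be settled by a bare citation: the evaluation point $1+\beta_N$ moves with $N$ and lies at distance of order $1-d$ from a support that also moves, while the standard CLT for linear spectral statistics requires analyticity on a fixed neighbourhood of the support; the paper's Proposition~\ref{prop:useful-estimates}(a) devotes a contour-integral and tightness argument (via the truncated process of \cite[Chapter 9]{bai2010spectral}) precisely to obtain tightness of $n\Delta_N(\eta_N)$ at such a moving point, and your argument needs an analogous justification (only $o_P(n^{-1/2})$ is required, but it must be proved). With these two points repaired, your implicit-function expansion around $1+\beta_N$ and the conditional CLT do give \eqref{eq:fluct_bis}.
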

In order to prove Proposition  \ref{prop:main_asympt} and Theorem \ref{th:main_fluct}, we only need to apply the above results to $\tS_N$. 

\subsection{Proof of Proposition~\ref{prop:main_asympt_bis}}
We first prove the theorem under assumption \ref{ass:infarray}.
We first establish that
\begin{equation}
\varlimsup_{N,n\to\infty}\lmax(\bS_N)\le 1 \as. \label{eq:limsup}
\end{equation}

Recall the definition of the set $B_N$ in \eqref{def:BN}.
Due to the spectrum confinement property \eqref{eq:spectrum-confinement}, we only need to prove that for any $\varepsilon>0$, the interval $[1+\varepsilon,+\infty)$
eventually stays outside the support of $\ulmu_N=\ulmu(r_N,{\Gamma_N})$. Relying on the caracterization of a point $x$ outside $\textrm{supp}(\ulmu_N)$, this will be the consequence of the following property
$$
\forall\, x\in [1+\varepsilon,\infty),\quad \exists\, y\in B_N,\ x=x_N(y) \quad \textrm{and}\quad x'_N(y)<0
$$
that we now prove.

Since $\lmax(\Gamma_N)=1$ under \ref{ass:asympt_spec_st_bis}, notice that $(0,1)\subset B_N$. Consider a real number $\eta$ such that
$$
\eta \in \left( \frac 1{1+\varepsilon}, 1\right)\ .
$$
For $s\le 1$, we have $|1-s\eta|\ge 1-\eta>0$, therefore by the definition \eqref{eq:def_xN} of $x_N$, and by the fact that $\mu^{\Gamma_N}\xrightarrow[N\to\infty]{} \delta_0$, we have $$
x_N(\eta)\xrightarrow[N,n\to\infty]{}\frac 1\eta <1+\varepsilon\qquad\textrm{and}\qquad x_N'(\eta) \xrightarrow[N,n\to\infty]{} -\frac 1{\eta^2}<0\ .
$$
So for $N$ large enough, we have $x_N(\eta)<1+\varepsilon$, and $x_N'(\eta)<0$. For such $N$'s, note that $x_N$ is continuous on $(0,\eta)$ and that $x_N(y)\to+\infty$ as $y\to 0^+$. We have proved so far that $[1+\varepsilon,\infty)\subset x_N((0,\eta))$. Notice finally that $x'_N$ is increasing on $(0,\eta)$, in particular $x_N'(y)\le x_N'(\eta)<0$ for all $y\in (0,\eta)$. Therefore $x_N((0,\eta))$ and thus $[1+\varepsilon,\infty)$ eventually lie outside the support of $\ulmu_N$. Equation \eqref{eq:limsup} is established.

We now prove that 
\begin{equation}
\varliminf_{N,n\to\infty}\lmax(\bS_N)\ge 1, \as. \label{eq:liminf}
\end{equation}
by an exact separation argument.

As $\ulmu=\delta_0$, we have $1/m_{\ulmu}(a)=a$, $1/m_{\ulmu}(b)=b$ for any $a,b>0$.  We intend to find some constant interval of the form $[a,1-\varepsilon]$, for small $\varepsilon>0$ which separates the eigenvalues of matrix $\Gamma_N$ into two non-empty parts. This is not always possible because even if $\lmax(\Gamma_N)=1$ and $\mu^{\Gamma_N}\rightarrow \delta_0$, there might be some intermediate eigenvalues among the $(\lambda_i(\Gamma_N))$'s for $i\ge 2$ eventually lying in $(0,1)$.
In order to circumvent this issue, we introduce the auxiliary matrices 
$$
\hat{\bS}_N:=\frac{1}{n}Z_N^*\Theta_N Z_N \quad\text{and}\quad\hat{S}_N:=\frac{1}{n}\Theta_N^\frac{1}{2} Z_NZ_N^*\Theta_N^\frac{1}{2}\ ,
$$
where $\Theta_N$ is obtained from the spectral decomposition of $\Gamma_N$ as:
$$
\Theta_N:=U_N\diag(1,0,\cdots)U_N^*\quad \textrm{where}\quad \Gamma_N=U_N\diag(1,\lambda_2(\Gamma_N),\cdots)U_N^*\ .
$$
Using \cite[Theorems 4.1 and 4.2]{silverstein1995analysis}, we conclude that for any $0<\varepsilon<1/2$, the interval $[\varepsilon,1-\varepsilon]$ is eventually outside the support of probability $\ulmu(r_N,\Theta_N)$, obtained from \eqref{eq:fixed-point} with parameters $r_N$ and $\Theta_N$. Notice in particular that  
$$
\lmax(\Theta_N)=1 \qquad \textrm{and}\qquad  \lambda_i(\Theta_N)=0\quad \textrm{for}\quad i=2:N\ . 
$$
Applying \cite[Theorem~1.2]{bai1999exact} to $\hat{\bS}_N$ with separating interval $[\varepsilon,1-\varepsilon]$ for arbitrary $\varepsilon\in (0,1/2)$, we conclude that almost surely, $\lmax(\hat{\underline{S}}_N)>1-\varepsilon$ for $N$ large enough. We have proved so far that 
$$
\varliminf_{N,n\to\infty}\lmax(\hat{\bS}_N)\ge 1
$$ almost surely. Now, since 
$$\bS_N-\hat{\bS}_N=\frac{1}{n}Z_N^*U_N\diag(0,\lambda_2(\bS_N),\cdots,\lambda_N(\bS_N))U_N^*Z_N$$
is nonnegative definite, we have 
$\varliminf_{N,n\to\infty}\lmax(\bS_N)\ge\varliminf_{N,n\to\infty}\lmax(\hat{\underline{S}}_N)\ge 1$.
Therefore Proposition~\ref{prop:main_asympt_bis} with assumption \ref{ass:infarray} is proved.

As a byproduct of the above proof, we can easily prove $\lmax(S_N)\xrightarrow[]{\mathcal P} 1$ without imposing \ref{ass:infarray}. Suppose that $S_N=\frac 1n \Gamma_N^{1/2}Z_NZ_N^*\Gamma_N^{1/2}$ satisfies \ref{ass:model_setting} and \ref{ass:asympt_spec_st_bis} and construct $\check{Z}=(\check{Z}_{i,j})_{i,j\ge 1}$ with $\check{Z}_{i,j}$ i.i.d random variables identically distributed as the entries of $Z_N$. Let $S_N'=\frac 1n \Gamma_N^{1/2}Z'_NZ_N'^*\Gamma_N^{1/2}$ with $Z'_N$ the top-left $N\times n$ submatrix of $\check{Z}$. Then according to the above proof, $\lmax(S_N')$ converges to 1 almost surely, hence in probability. Since $\lmax(S_N)$ and $\lmax(S'_N)$ have the same distribution, we have also $\lmax(S_N)\cvprob 1$.

Finally, we prove that if the entries $Z_{i,j}^{(N)}$ are i.d standard Gaussian variables, and i.i.d for all $1\le i,j\le N$, the convergence \eqref{eq:asympt_bis_cv} holds almost surely without the need of assumption \ref{ass:infarray}. This mainly relies on a concentration argument. Recall that we already have
$\lmax(S_N) \cvprob 1.$
Using \cite[Theorem~5.6]{boucheron2013concentration}, we prove the following concentration inequality: for all $N\ge 1$ and all $\varepsilon>0$,
\begin{equation}
\P(|\sqrt{\lmax(S_N)}-\E\sqrt{\lmax(S_N)}|>\varepsilon)<2e^{-CN\varepsilon^2} \label{eq:concentration}
\end{equation}
where $C>0$ is a proper fixed constant. Indeed it suffices to show that the function $s$ defined by
$$s:Z_N\mapsto \sqrt{\lambda_{\max}(S_N)}=\sqrt{\lambda_{\max}\left(\frac{1}{n}\Gamma_N^\frac{1}{2}Z_NZ_N^*\Gamma_N^\frac{1}{2}\right)}$$
is $\frac{1}{\sqrt{CN}}$-lipschitz, where we consider the $N\times n$ matrix $Z_N$ as a vector in Euclidean space $\R^{Nn}$ when $Z_{i,j}^{(N)}$ are real Gaussian, and in $\R^{2Nn}$ when the entries are complex Gaussian. Note that the Euclidean norm of the vector $Z_N$ is the same as the Frobenius norm $\|Z_N\|_F$ of the matrix $Z_N$. So for any two matrices $Z_N$ and $\hat Z_N$, we have
\begin{align*}
|s(Z_N)-s(\hat Z_N)| &= \frac{1}{\sqrt{n}} \left| \|\Gamma_N^{1/2} Z_N\| - \| \Gamma_N^{1/2} \hat{Z}_N\| \right| &
\le & \ \frac{1}{\sqrt{n}} \|\Gamma_N^{1/2} (Z_N - \hat{Z}_N)\| \\
&\le   \quad \frac{1}{\sqrt{n}}\|\Gamma_N^{1/2}(Z_N-\hat{Z}_N)\|_F & \stackrel{(a)}\le &\ \frac{1}{\sqrt{n}}\|\Gamma_N^{1/2}\|\|Z_N-\hat{Z}_N\|_F \stackrel{(b)}=\frac{1}{\sqrt{n}}\|Z_N-\hat{Z}_N\|_F\, , 
\end{align*}
where $(a)$ follows from the Frobenius norm inequality $\|AB\|_F\le \|A\|\|B\|_F$, and $(b)$ from the fact that $\|\Gamma_N^{1/2}\|=\sqrt{\lmax(\Gamma_N)}=1$.
Thus $s$ is  $1/\sqrt{n}$-lipschitz, and the concentration inequality \eqref{eq:concentration} is proved. Using Borel-Cantelli lemma, we have
\begin{equation}
\sqrt{\lmax(S_N)}-\E\sqrt{\lmax(S_N)}\to 0 \as.\label{eq:conv_sqrt_lambda}
\end{equation}
Together with
$\sqrt{\lmax(S_N)}\xrightarrow{\mathcal P} 1,$
we then obtain that $\E\sqrt{\lmax(S_N)}\to 1$. By \eqref{eq:conv_sqrt_lambda} again, it follows that 
$$\lmax(S_N)\xrightarrow[N,n\to\infty]{} 1\as.$$
The proof of Proposition ~\ref{prop:main_asympt_bis} is complete.

\subsection{Proof of Theorem~\ref{th:main_fluct_bis}} \label{sec:proof_fluct}
We first prove the fluctuation of $\lmax(S_N)$ under \ref{ass:model_setting}, \ref{ass:asympt_spec_st_bis}, \ref{ass:separation} and \ref{ass:diagonal}. Under these assumptions, $\Gamma_N$ is of the form
$$\Gamma_N=\begin{pmatrix}1 & 0 \\ 0 & \Gammasf_{N-1} \end{pmatrix},$$
where $(\Gammasf_{N-1})$ is a sequence of semidefinite positive Hermitian matrices satisfying $\mu^{\Gammasf_{N-1}}\cvweak\delta_0$, and 
$$
\varlimsup_{N\to\infty}\lmax(\Gammasf_{N-1})=\varlimsup_{N\to\infty}\lambda_2(\Gamma_N)=\varlimsup_{N\to\infty}\frac{\lambda_2(\Gamma_N)}{\lambda_1(\Gamma_N)}<1
$$
by assumption \ref{ass:separation}.
We set $d=\varlimsup_{N\to\infty}\lmax(\Gammasf_{N-1})$. For convenience, in this section we omit all the subscript $N$ of matrices, for example we write $S=n^{-1}\Gamma^\frac{1}{2}ZZ^*\Gamma^\frac{1}{2}$. In the following of this section we write $\lmax(S_N)$ as $\lmax$ if it does not cause any ambiguity.

Recall the submatrix notations introduced in \eqref{eq:def_submatrix} and consider the following block decomposition of matrix $S$:
\begin{equation}\label{eq:bloc-S}
S=\begin{pmatrix}
S_{1,1} & S_{1,2:N} \\ 
S_{2:N,1} & S_{2:N,2:N}
\end{pmatrix}=\frac{1}{n}\begin{pmatrix}
Z_{1,\cdot}Z_{1,\cdot}^* & Z_{1,\cdot}Z_{2:N,\cdot}^*\Gammasf^{\frac{1}{2}} \\ 
\Gammasf^{\frac{1}{2}}Z_{2:N,\cdot}Z_{1,\cdot}^* & \Gammasf^{\frac{1}{2}}Z_{2:N,\cdot}Z_{2:N,\cdot}^*\Gammasf^{\frac{1}{2}}
\end{pmatrix}\ .
\end{equation}
Denote
$$
\bS_{2:N,2:N}:=\frac 1nZ_{2:N,\cdot}^*\Gammasf Z_{2:N,\cdot}\qquad \textrm{and}\qquad \mathsf r_N=\frac{N-1}n\ . 
$$
Analog to $\ulmu(r_N,{\Gamma_N})$ defined in \eqref{eq:fixed-point}, we define the probability measure $\ulmu(\mathsf r_N,{\Gammasf_{N-1}})$ whose Cauchy-Stieltjes transform $\underline{\mathsf{m}}$ satisfies the equation 
$$z=\frac{1}{\underline{\mathsf m}(z)}+\frac{1}{n}\sum_{k=2}^{N}\frac{\lambda_k(\Gamma_N)}{1-\lambda_k(\Gamma_N)\underline{\mathsf{m}}(z)},\quad \forall z\in\C^+.$$
Also, for all $y\in \mathsf B_N:=\{y\in\R: y\ne 0, y^{-1}\ne \lambda_k(\Gamma_N), \forall k=2,\dots,N\}$, we define
\begin{equation}
    \mathsf x_N(y):=\frac{1}{y}+\frac{1}{n}\sum_{k=2}^{N}\frac{\lambda_k(\Gamma_N)}{1-\lambda_k(\Gamma_N)y}=\frac{1}{y}+\mathsf r_N\int\frac{s}{1-sy}\dd \mu^{\Gammasf_{N-1}}(s).\label{eq:def_sfxN}
\end{equation}
Consider in particular
\begin{equation}\label{def:theta}
\theta_N=\mathsf x_N(1) = 1 + \frac 1n \sum_{k=2}^N \frac{\lambda_k(\Gamma_N)}{1-\lambda_k(\Gamma_N)}\ .
\end{equation}

Let $\varepsilon>0$ be small enough. Thanks to the assumption $\mu^{\Gammasf_{N-1}}\cvweak\delta_0$ and $d=\varlimsup\lmax(\Gammasf_{N-1})<1$, one can adapt the first part of the proof of Proposition~\ref{prop:main_asympt_bis} to obtain that eventually
$$
\sup \supp\ulmu(\mathsf r_N,{\Gammasf_{N-1}})\ <\ d+\varepsilon\, .
$$
Let $\varepsilon<\frac {1-d}2$ so that $d+\varepsilon<1-\varepsilon$ and consider the family of events $\Omega_N$ defined as
\begin{equation}
\Omega_N:=\{\lambda_\mathrm{max}(S_{2:N,2:N})<d+\varepsilon<1-\varepsilon<\lmax\}. \label{eq:def_Omega_N}
\end{equation}
According to the spectrum confinement property \cite[Theorem 1.1]{bai1998no} and to Proposition~\ref{prop:main_asympt_bis}, one has
$$
\P(\Omega_N)\xrightarrow[N,n\to\infty]{} 1\ .
$$
In particular, for any sequence of events $A_N$, we have $\P(A_N)-\P(A_N \cap \Omega_N)\to_{N\to\infty} 0$, which can be written
$$
\P(A_N)\doteq\P_\Omega(A_N)
$$ if one writes $\P_\Omega(\cdot)$ for $\P(\,\cdot\, \cap \Omega_N)$ and recall the notation $x\doteq y$ for $x-y\to 0$.
Hence, with no loss of generality, we will assume below that $\Omega_N$ holds. 

Let $\lmax \in \Omega_N$. Using the block decomposition \eqref{eq:bloc-S} of $S$ together with the determinantal formula based on Schur complements (see for instance \cite[Section 0.8.5]{horn2012matrix}), the eigenvalue $\lmax$ satisfies the equation: 
\begin{multline}
\det(\lmax I-S)\\
=\quad\left(\lmax I-S_{1,1}-S_{1,2:N}(\lmax I-S_{2:N,2:N})^{-1}S_{2:N,1}\right)\det(\lmax I-S_{2:N,2:N})\quad=\quad0\, .
\end{multline}
Since $\det(\lmax I-S_{2:N,2:N})\ne 0$ on $\Omega_N$, we have
\begin{eqnarray}
    \lmax&=&S_{1,1}+S_{1,2:N}(\lmax I-S_{2:N,2:N})^{-1}S_{2:N,1}\, , \nonumber\\
    &=&  \frac{Z_{1,\cdot}Z_{1,\cdot}^*}n + \frac{Z_{1,\cdot}Z_{2:N,\cdot}^*\Gammasf^{\frac{1}{2}}}n 
    \left( \lmax I - \frac 1n \Gammasf^{\frac{1}{2}}Z_{2:N,\cdot}Z_{2:N,\cdot}^*\Gammasf^{\frac{1}{2}}\right)^{-1} \frac{\Gammasf^{\frac{1}{2}}Z_{2:N,\cdot}Z_{1,\cdot}^*}n\, , \nonumber\\
    &=& \frac{Z_{1,\cdot}} n\Big( I +  A^*
    \left( \lmax I -  A A^*\right)^{-1}
A
    \Big)  Z_{1,\cdot}^*\, . \qquad \left( A=n^{-1/2}\,  {\Gammasf^{\frac{1}{2}}Z_{2:N,\cdot}}\right)
      \label{eq:lambda_N}
\end{eqnarray}
Using the equality $I+A^*(\lambda I-AA^*)^{-1}A=\lambda(\lambda I-A^*A)^{-1}$ for all scalar $\lambda$ and all matrix $A$ such that $\lambda I-AA^*$ and $\lambda I-A^*A$ are invertible, the equation \eqref{eq:lambda_N} is equivalent to
\begin{equation}
    1=\frac{1}{n}Z_{1,\cdot}(\lmax I-\bS_{2:N,2:N})^{-1}Z_{1,\cdot}^*\, .\label{eq:lambda_N_2}
\end{equation}
As $\theta_N=\mathsf x_N(1)\ge 1$ lies outside the support of $\ulmu(\mathsf r_N,{\Gammasf_{N-1}})$ for large $N$, \cite[Theorem~4.2]{silverstein1995analysis} yields
$$
\underline{\mathsf m}(\theta_N)=\underline{\mathsf m}(\mathsf x_N(1))=1\, .
$$
This can be regarded as a ``deterministic" version of \eqref{eq:lambda_N_2}, which indicates that $\lmax$ and $\theta_N$ are comparable. 

In order to prove the Gaussian fluctuations of $\lmax$, we need to prove that for all $b\in \R$
\begin{equation}
\P\left(\lmax\le \eta_N\right)\xrightarrow[N,n\to\infty]{} \Phi(b,\sigma) 
\label{eq:cv_proba}
\end{equation}
where 
$$
\eta_N:=\theta_N+\frac b{\sqrt{n}}\,  ,\qquad \Phi(x,\sigma):=\frac{1}{\sigma\sqrt{2\pi}}\int_{-\infty}^x e^{-\frac{t^2}{2\sigma^2}}\dd t\qquad \textrm{and}
\qquad \sigma^2=\E\left|Z^{(1)}_{1,1}\right|^4-1\, .
$$
Note that on $\Omega_N$ the function
$$
\Upsilon(\lambda):=\frac{1}{n}Z_{1,\cdot}(\lambda I-\bS_{2:N,2:N})^{-1}Z_{1,\cdot}^*
$$
is decreasing on $(d+\varepsilon,+\infty)$. Let $N$ large enough so that $\eta_N>d+\varepsilon$.

Taking into account the fact that $\Upsilon(\lmax)=1$ due to \eqref{eq:lambda_N_2},
we have
\begin{equation}
\P_\Omega(\lmax\le \eta_N)\ =\ \P_\Omega\left(\Upsilon(\eta_N) \le 1\right) 
\ =\ \P_\Omega\left(\sqrt{n} \left( \Upsilon(\eta_N) - \underline{\mathsf m}(\eta_N) \right)
\le \sqrt{n}(1 - \underline{\mathsf m}(\eta_N))\right)\ .
\label{eq:proba_omega_ineq}
\end{equation}
We first prove that
\begin{equation}\label{eq:DL-m}
\sqrt{n}(1- \underline{\mathsf m}(\eta_N)) = b +o(1)\ .
\end{equation}
Taking into account the fact that $\underline{\mathsf m}(\theta_N)=1$ and performing a Taylor expansion on $\underline{\mathsf m}$ around $\theta_N$ yields
$$
\sqrt{n}(1- \underline{\mathsf m}(\eta_N)) \ =\  \sqrt{n}(\underline{\mathsf m}(\theta_N)- \underline{\mathsf m}(\eta_N)) \ =\ -b\,\underline{\mathsf m}'(\theta_N)-\underline{\mathsf m}''(\xi_N)\frac{b^2}{\sqrt{n}} 
$$
where $\xi_N$ is between $\theta_N= \mathsf x_N(1)$ and $\eta_N$. The assumptions $\mu^{\Gammasf_{N-1}}\cvweak\delta_0$ and $d=\varlimsup\lmax(\Gammasf_{N-1})<1$ yield
$$
\theta_N,\eta_N\xrightarrow[N,n\to\infty]{} 1\ .
$$
Similarly, one proves that $\mathsf x_N'(1)\xrightarrow[N,n\to\infty]{} -1$. By \cite[Theorem 4.2]{silverstein1995analysis}, equality $\underline{\mathsf m}(\mathsf x_N(y))=y$ holds for any $y\notin\supp\mu^{\mathsf r_N,\Gammasf_{N-1}}$. Differentiating, we get
$$
\underline{\mathsf m}'(\mathsf x_N(y))\mathsf x_N'(y)=1\qquad \textrm{and}
\qquad \underline{\mathsf m}'(\mathsf x_N(1))= \frac 1{\mathsf x_N'(1)}\xrightarrow[N,n\to\infty]{} - 1\ .
$$
Finally, for large $N$, we have 
$
\sup_N\supp\ulmu(\mathsf r_N,\Gammasf_{N-1})<d+\varepsilon<1-\varepsilon<\min(\eta_N,\theta_N)$ which implies 
$$
|\underline{\mathsf m}''(\xi_N)|=2\left| \int\frac{\dd\ulmu(\mathsf r_N,\Gammasf_{N-1})(s)}{(\xi_N-s)^3}\right| \ \le \ \frac 2{(1-d - 2\varepsilon)^3}\ .
$$
Plugging this into the Taylor expansion finally yields \eqref{eq:DL-m}.

We now go back to \eqref{eq:proba_omega_ineq} and handle the quantity $\sqrt{n} ( \Upsilon(\eta_N) - \underline{\mathsf m}(\eta_N))$. More precisely, we prove in the sequel that
\begin{equation}\label{eq:almost-CLT}
\sqrt{n} ( \Upsilon(\eta_N) - \underline{\mathsf m}(\eta_N)) = \sqrt{n} \left( \frac 1n Z_{1,\cdot} Z_{1,\cdot}^* - 1\right) + o_P(1)\ .
\end{equation}
In order to proceed, we need the following estimates, valid under the assumptions of Theorem \ref{th:main_fluct_bis}.
\begin{prop} \label{prop:useful-estimates} Assume that \ref{ass:model_setting}, \ref{ass:asympt_spec_st_bis}, \ref{ass:separation} and \ref{ass:diagonal} hold, then 
\begin{enumerate}
\item[(a)] 
$
\sqrt{n}\left( \frac{1}n \tr\left(\eta_N I-\bS_{2:N,2:N}\right)^{-1}- \underline{\mathsf m}(\eta_N)
\right) \quad \xrightarrow[N,n\to\infty]{\mathcal P}\quad 0\ ,
$
\item[(b)]
$
\frac{\sqrt{n}}{\eta_N}\left(\frac 1n Z_{1,\cdot}(\eta_N I-\bS_{2:N,2:N})^{-1}\bS_{2:N,2:N}Z_{1,\cdot}^*
-\frac 1n \tr (\eta_N I-\bS_{2:N,2:N})^{-1}\bS_{2:N,2:N}\right) 
\quad \xrightarrow[N,n\to\infty]{\mathcal P}\quad 0\ .
$
\end{enumerate}
\end{prop}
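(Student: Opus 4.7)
The plan is to prove (a) by splitting into a mean-zero fluctuation and a deterministic bias, and (b) by conditioning on $\mathcal{G}:=\sigma(Z_{2:N,\cdot})$ and applying the standard variance inequality for quadratic forms in independent variables. Both arguments exploit the fact that, on the $\mathcal{G}$-measurable event $\Omega'_N:=\{\lambda_{\max}(S_{2:N,2:N})<d+\varepsilon\}\supset \Omega_N$, the resolvent $R:=(\eta_N I-\bS_{2:N,2:N})^{-1}$ satisfies $\|R\|\le c^{-1}$ with $c:=1-d-2\varepsilon>0$, because eventually $\eta_N>1-\varepsilon$. Since $\mathbb P(\Omega'_N)\to 1$, I may freely restrict to $\Omega'_N$ throughout.

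For (a), I would decompose
\begin{equation*}
\tfrac{1}{n}\tr R-\underline{\mathsf{m}}(\eta_N)=\bigl(\tfrac{1}{n}\tr R-\E\,\tfrac{1}{n}\tr R\bigr)+\bigl(\E\,\tfrac{1}{n}\tr R-\underline{\mathsf{m}}(\eta_N)\bigr).
\end{equation*}
A martingale difference decomposition along the columns of $Z_{2:N,\cdot}$, combined with the rank-one resolvent identity, bounds each increment of the centered fluctuation by $C/n$, yielding variance $O(1/n^{2})$. The bias term is $O(1/n)$: this is the classical Bai-Silverstein deterministic-equivalent estimate for the Mar\v{c}enko-Pastur model with a population covariance $\Gammasf_{N-1}$ of uniformly bounded spectral norm, evaluated at a real point at positive distance from $\supp(\underline{\mathsf{m}})$. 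After multiplication by $\sqrt{n}$ both contributions have $L^{2}$ norm $O(1/\sqrt{n})$, giving the claimed convergence in probability.

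For (b), I would set $M:=R\,\bS_{2:N,2:N}$, which is $\mathcal{G}$-measurable and commutes with $R$. Since $Z_{1,\cdot}$ is independent of $\mathcal{G}$ with centered entries of unit variance and finite fourth moment, one has $\E[\tfrac{1}{n}Z_{1,\cdot}MZ_{1,\cdot}^{*}\mid\mathcal{G}]=\tfrac{1}{n}\tr M$, and the standard variance identity for quadratic forms gives
\begin{equation*}
\E\!\left[\bigl|\tfrac{1}{n}Z_{1,\cdot}MZ_{1,\cdot}^{*}-\tfrac{1}{n}\tr M\bigr|^{2}\,\big|\,\mathcal{G}\right]\le\frac{C}{n^{2}}\|M\|_{F}^{2}=\frac{C}{n^{2}}\tr(R^{2}\bS_{2:N,2:N}^{2})\le\frac{C}{c^{2}n^{2}}\tr(\bS_{2:N,2:N}^{2})\text{ on }\Omega'_N.
\end{equation*}
A direct fourth-moment calculation for the entries of $Z_{2:N,\cdot}$ yields
\begin{equation*}
\E\,\tr(\bS_{2:N,2:N}^{2})=\tr(\Gammasf_{N-1}^{2})+O\!\left(\tfrac{1}{n}(\tr\Gammasf_{N-1})^{2}+\tfrac{1}{n}\textstyle\sum_{a}(\Gammasf_{N-1})_{aa}^{2}\right),
\end{equation*}
and Assumption \ref{ass:asympt_spec_st_bis} combined with $\|\Gammasf_{N-1}\|\le 1$ forces $\tr\Gammasf_{N-1}=o(N)$, hence $\tr\Gammasf_{N-1}^{2}\le\tr\Gammasf_{N-1}=o(n)$ and each remainder term is $o(n)$. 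Multiplying by $n/\eta_N^{2}=O(1)$ and taking full expectation gives $\E\!\left[\bigl|\tfrac{\sqrt{n}}{\eta_N}(\cdots)\bigr|^{2}\mathbf{1}_{\Omega'_N}\right]=o(1)$, which yields the claimed convergence in probability.

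The main obstacle is twofold. For (a), one must extend the Bai-Silverstein bias estimate from $\mathbb{C}^{+}$ to the real point $\eta_N$; this is handled by an analytic continuation argument exploiting that $\eta_N$ is bounded away from $\supp(\underline{\mathsf{m}})$. For (b), the key estimate $\E\,\tr(\bS_{2:N,2:N}^{2})=o(n)$ crucially requires both halves of \ref{ass:asympt_spec_st_bis}: the ESD collapsing to $\delta_{0}$ provides vanishing average eigenvalue, while the uniform bound $\|\Gammasf_{N-1}\|\le 1$ turns $\sum_{k}\lambda_{k}(\Gammasf_{N-1})^{2}\le \|\Gammasf_{N-1}\|\,\tr\Gammasf_{N-1}$ into the desired $o(n)$.
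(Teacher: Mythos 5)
Your part (b) is essentially the paper's argument: you condition on $Z_{2:N,\cdot}$ and invoke the variance bound for quadratic forms (the paper cites \cite[Lemma B.26]{bai2010spectral}), then kill the trace term $\frac 1n\tr\bigl\{(\eta_N I-\bS_{2:N,2:N})^{-2}\bS_{2:N,2:N}^2\bigr\}$. Where the paper does this by combining $\mu^{\bS_{2:N,2:N}}\cvweak\delta_0$ a.s.\ with the spectral confinement of $\lmax(\bS_{2:N,2:N})$, you instead compute $\E\tr(\bS_{2:N,2:N}^2)=\tr(\Gammasf_{N-1}^2)+O\bigl(n^{-1}(\tr\Gammasf_{N-1})^2+n^{-1}\sum_a(\Gammasf_{N-1})_{aa}^2\bigr)=o(n)$ directly from fourth moments; this is a correct and even more quantitative variant, and the restriction to the $\sigma(Z_{2:N,\cdot})$-measurable event $\Omega'_N$ is handled properly there since $\Omega'_N$ is measurable with respect to the conditioning $\sigma$-field.

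Part (a), however, has a genuine gap. You decompose around the \emph{full} expectation $\E\,\frac1n\tr(\eta_N I-\bS_{2:N,2:N})^{-1}$ and run a martingale-difference bound at the real point $\eta_N$, but off the confinement event the resolvent at $\eta_N$ is not bounded (indeed $\eta_N$ may lie inside or arbitrarily close to the spectrum), so this expectation need not be finite or well behaved, and the $C/n$ bound on each martingale increment (via interlacing) only holds on events involving the partially resampled matrices that are \emph{not} adapted to the martingale filtration. Saying ``restrict to $\Omega'_N$ throughout'' does not fix this: once you insert the indicator, the martingale structure and the unconditional expectation in your bias term are destroyed, and $\P(\Omega_N'^{\,c})$ is not shown to decay fast enough (e.g.\ $o(n^{-1})$) to absorb the unbounded contribution. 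This is exactly the difficulty the paper's proof is built to avoid: it represents $n\Delta_N(\eta_N)$ as a Cauchy contour integral of $M_N(z)=n\bigl(m^{\bS_{2:N,2:N}}(z)-\underline{\mathsf m}(z)\bigr)$ over a contour enclosing $\supp\ulmu(\mathsf r_N,\Gammasf_{N-1})$ and staying away from $\eta_N$, replaces $M_N$ by the truncated process $\hat M_N$ of \cite[(9.8.2)]{bai2010spectral} (which coincides with $M_N$ a.s.\ eventually), and uses tightness of $\hat M_N$ on the contour from \cite[Theorem 9.10]{bai2010spectral} together with Prohorov and the continuous mapping theorem; note also that only \emph{tightness} of $n\Delta_N(\eta_N)$ is needed, not the $O(1/\sqrt n)$ $L^2$ rate you aim for, and your $O(1/n)$ bias estimate at the real point is cited rather than proved, whereas in the paper it is subsumed in the same contour-integral/tightness argument. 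To repair your route you would need either a quantitative tail bound on $\P(\Omega_N'^{\,c})$ or a regularization of the resolvent — i.e.\ essentially the truncation device the paper imports.
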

Proof of Proposition \ref{prop:useful-estimates} is postponed to Section \ref{sec:proof-prop}. We have
\begin{eqnarray}
\lefteqn{\sqrt{n} ( \Upsilon(\eta_N) - \underline{\mathsf m}(\eta_N))}\nonumber\\
&=& \sqrt{n} \left( \Upsilon(\eta_N) - \frac 1n \tr \left( \eta_N I- \bS_{2:N,2:N} \right)^{-1} + 
\frac 1n \tr \left( \eta_N I - \bS_{2:N,2:N} \right)^{-1} -
\underline{\mathsf m}(\eta_N)\right)\ ,\nonumber \\
&=& \sqrt{n} \left( \Upsilon(\eta_N) - \frac 1n \tr \left( \eta_N I- \bS_{2:N,2:N} \right)^{-1}\right) +o_P(1)
\label{eq:modif_3}
\end{eqnarray}
by the first part of Proposition \ref{prop:useful-estimates}. We now apply the resolvent identity $A^{-1}-B^{-1}=A^{-1}(B-A)B^{-1}$ to $A=\eta_N I-\bS_{2:N,2:N}$ and $B=\eta_NI$ and obtain
\begin{eqnarray}
\lefteqn{\sqrt{n}\left( \Upsilon(\eta_N)-\frac{1}{n}\tr\left(\eta_N I-\bS_{2:N,2:N}\right)^{-1} \right)}\nonumber\\
 &= & \frac{\sqrt{n}}{\eta_N}\left(\frac 1n  Z_{1,\cdot}(\eta_N I-\bS_{2:N,2:N})^{-1}\bS_{2:N,2:N}Z_{1,\cdot}^*-\frac 1n \tr (\eta_N I-\bS_{2:N,2:N})^{-1}\bS_{2:N,2:N}\right) 
 \nonumber \\
 &&\qquad +\frac{\sqrt{n}}{\eta_N}\left(\frac 1n Z_{1,\cdot}Z_{1,\cdot}^*-1\right)\ , \nonumber \\
 &= & \frac{\sqrt{n}}{\eta_N}\left(\frac 1n Z_{1,\cdot}Z_{1,\cdot}^*-1\right) + o_P(1)\label{eq:modif_4}
\end{eqnarray}
where the last equality follows from the second estimate of Proposition \ref{prop:useful-estimates}. Notice that by the standard Central Limit theorem, 
$$
\sqrt{n}\left( \frac{1}{n}Z_{1,\cdot}Z_{1,\cdot}^*-1\right)=\sqrt{n}\left( \frac 1n \sum_{j=1}^n|Z_{1,j}|^2-1\right)\quad \xrightarrow[n\to\infty]{\mathcal D}
\quad  
\gNr(0,\var|Z_{1,1}|^2)
$$
where $\var|Z_{1,1}|^2=\E |Z_{1,1}|^4 -1$. Since $\eta_N\to 1$, one has 
$$
\frac{\sqrt{n}}{\eta_N}\left(\frac 1n Z_{1,\cdot}Z_{1,\cdot}^*-1\right) = \sqrt{n}\left(\frac 1n Z_{1,\cdot}Z_{1,\cdot}^*-1\right) +o_P(1)\, .
$$
Plugging this last estimate into \eqref{eq:modif_4} and \eqref{eq:modif_3} finally yields \eqref{eq:almost-CLT}. We can now conclude the proof of the CLT:
\begin{eqnarray}
\P(\lmax\le \eta_N)&\doteq&
\P_\Omega(\lmax\le \eta_N)\ ,\nonumber \\
&\stackrel{(a)}=& \P_\Omega\left(\sqrt{n} \left( \Upsilon(\eta_N) - \underline{\mathsf m}(\eta_N) \right)
\le b+o(1)\right)\ ,\nonumber \\
&\stackrel{(b)}= & \P_\Omega\left(  \sqrt{n} \left( \frac 1n Z_{1,\cdot} Z_{1,\cdot}^* - 1\right) +o_P(1) \le b \right) \ ,\nonumber \\
&\doteq& \P\left(  \sqrt{n} \left( \frac 1n Z_{1,\cdot} Z_{1,\cdot}^* - 1\right) +o_P(1) \le b \right)\label{eq:almost-CLT-bis}
\end{eqnarray}
where $(a)$ follows from \eqref{eq:proba_omega_ineq} and \eqref{eq:DL-m} and $(b)$ follows from \eqref{eq:almost-CLT}. 
We can now get rid of the term $o_P(1)$ in \eqref{eq:almost-CLT-bis} by Slutsky's theorem and finally obtain the desired result:
$$
\P(\sqrt{n} ( \lmax - \theta_N) \le b) \ =\ \P (\lmax \le \eta_N)\quad\xrightarrow[N,n\to\infty]{}\quad  \Phi(b,\sigma)\, , \qquad \sigma^2=\E|Z_{1,1}|^4-1\, .
$$
This completes the proof of Theorem \ref{th:main_fluct_bis} under condition \ref{cond:diag}.

Assume now that $Z_{ij}^{(N)}\sim {\mathcal N}_\C(0,1)$ and consider the eigen-decomposition $\Gamma_N=U_ND_NU_N^*$, where $U_N$ is unitary and $D_N=\diag(\lambda_1(\Gamma_N),\dots,\lambda_N(\Gamma_N))$. Then $S_N$ can be written as
$$
S_N=\frac{1}{n}U_ND_N^{\frac{1}{2}}\left(U_N^*Z_N\right)\left(Z_N^*U_N\right)D_N^{\frac{1}{2}}U_N^* = \frac{1}{n}U_ND_N^{\frac{1}{2}}\tilde Z_N \tilde Z_N^*D_N^{\frac{1}{2}}U_N^*
\quad \textrm{where}\quad \tilde Z_N= U_N^*Z_N
$$
and has the same eigenvalues as the matrix 
$R_N=n^{-1}D_N^{\frac{1}{2}}\tilde Z_N \tilde Z_N^* D_N^{\frac{1}{2}}$. It remains to notice that $\tilde Z_N$ has i.i.d. ${\mathcal N}_\C(0,1)$ entries. In particular,
$R_N$ satisfies \ref{ass:model_setting}, \ref{ass:asympt_spec_st_bis}, \ref{ass:separation} and \ref{ass:diagonal}, and the desired result follow for $S_N$.
Theorem \ref{th:main_fluct_bis} is established under condition \ref{cond:gauss-C}.

Assume now that $Z_{ij}^{(N)}\sim {\mathcal N}_\R(0,1)$ and that $\Gamma_N$ is real symmetric. In this case, $\Gamma_N$'s eigen-decomposition writes $\Gamma_N= O_N D_N O_N^\tran$, where matrix $O_N$ is orthogonal. It remains to notice that $O_N^\tran Z_N$ has i.i.d ${\mathcal N}_\R(0,1)$ entries and to proceed as in the complex case to prove Theorem \ref{th:main_fluct_bis} under condition \ref{cond:gauss-R}.

Proof of Theorem \ref{th:main_fluct_bis} is completed.
\subsubsection{Proof of Proposition \ref{prop:useful-estimates}}\label{sec:proof-prop}
We first establish item $(a)$. Denote by 
$$
\Delta_N(x)= \frac{1}{n} \tr\left(x I-\bS_{2:N,2:N}\right)^{-1}- \underline{\mathsf m}(x)\ .
$$ 
We will first establish that $n\Delta_n(\eta_N)$ is tight and then, as an easy consequence, we will deduce the desired convergence: $\sqrt{n}\Delta_N(\eta_N)\xrightarrow[N,n\to\infty]{\mathcal P} 0$. 

If $x\ge 1-\varepsilon$ is fixed with $1-\varepsilon> d+\varepsilon$, then the tightness of $n\Delta_N(x)$ is a consequence of Bai and Silverstein's peripheral results of their CLT paper \cite{bai2004clt}, see also \cite[Chapter 9]{bai2010spectral}. In fact,
$$
\Delta_N(x) = \int f(x,\lambda) \mu^{\bS_{2:N,2:N}}(d\lambda) - \int  f(x,\lambda)  \ulmu(\mathsf r_N,\Gammasf_{N-1})(d\lambda)\quad \textrm{where} \quad f(x,\lambda)=\frac 1{x-\lambda}\ .
$$
Notice that for any $x\ge 1-\varepsilon$, $\lambda\mapsto f(x,\lambda)$ is analytic in a neighbourhood of $[0,d+\varepsilon]$ which contains the support of $\ulmu(\mathsf r_N,{\Gammasf_{N-1}})$. According to \cite[Theorem 9.10(1)]{bai2010spectral} and to the remark at the end of page 265 in \cite{bai2010spectral} which tightens the interval where the function $f(x,\cdot)$ needs to be analytic, we immediatly obtain the tightness of $(n\Delta_N(x))$.

The case where $x=\eta_N\ge 1-\varepsilon$ for $N$ large necessitates some adaptation. We closely follow \cite[Chapter 9]{bai2010spectral}. Denote by 
$$
M_N(z) = n \left( m^{\bS_{2:N,2:N}}(z) - \underline{\mathsf m}(z)\right)
$$
and by ${\mathcal C}^+$ the contour defined by ($\delta,u>0$ fixed)
$$
{\mathcal C}^+={\mathcal C}_\ell \cup {\mathcal C}_{\textrm{up}} \cup {\mathcal C}_r\quad \textrm{where} \quad 
\left\{ 
\begin{array}{ccc}
{\mathcal C}_\ell&=& \{z=(-\delta, y)\, ,\ y\in [0,u]\} \\
{\mathcal C}_{\textrm{up}} &=&  \{z=(x, u)\, ,\ x\in [-\delta,d+\varepsilon]\} \\
{\mathcal C}_r &=&  \{z=(d+\varepsilon, y)\, ,\ y\in [0,u]\}
\end{array}
\right.\ .
$$
Consider the truncated version $\hat{M}_N(z)$ of $M_N(z)$ as defined in \cite[(9.8.2)]{bai2010spectral} then
$$
\int_{\mathcal C} \frac 1{\eta_N-z}\,  \left( \hat M_N(z) - M_N(z) \right) \, \dd z \xrightarrow[N,n\to\infty]{\as.} 0 \qquad \textrm{where} \qquad  {\mathcal C} = {\mathcal C}^+\cup \overline{{\mathcal C}^+}
$$
and $\{ \hat M_N(\cdot) \}$ forms a tight sequence on ${\mathcal C}$. Consider now the mapping 
$$
\Gamma_N: \hat M_N(\cdot) \quad \longmapsto \quad \frac 1{2\ii \pi} \int_{\mathcal C} \frac 1{\eta_N -z}\,  \hat M_N(z)\, \dd z\ .
$$
$\Gamma_N$ is a continuous mapping from $C({\mathcal C}, \R^2)$ to $\C$. Applying Prohorov's theorem (see for instance \cite[Theorem 16.3]{kallenberg2006foundations}) and the continuous mapping theorem \cite[Theorem 4.27]{kallenberg2006foundations}, we conclude that $\Gamma_N(\hat M_N)$ is tight. It remains to notice that 
$$
n\Delta_N(\eta_N) = \Gamma_N(\hat M_N) + \underbrace{\left( \Gamma_N(\hat M_N) - \Gamma_N(M_N)\right)}_{\to 0\ \as.}
$$ 
to conclude that $n\Delta_N(\eta_N)$ is tight. Now let $\delta>0$ be fixed, then
$$
\P ( | \sqrt{n} \Delta_N(\eta_N)| > \delta) = \P (|n \Delta_N(\eta_N)| > \sqrt{n} \delta) \xrightarrow[N,n\to\infty]{} 0
$$
by tightness, hence the convergence of $\sqrt{n}\Delta_N(\eta_N)$ to zero in probability. Part $(a)$ of Proposition \ref{prop:useful-estimates} is proved.

We now prove part $(b)$ of Proposition \ref{prop:useful-estimates} and rely on the lemma on quadratic forms \cite[Lemma B.26]{bai2010spectral}. Denote by
$$
P_N = \sqrt{n} \left( 
\frac 1n Z_{1,\cdot}(\eta_N I-\bS_{2:N,2:N})^{-1}\bS_{2:N,2:N}Z_{1,\cdot}^*-\frac 1n \tr\left\{ (\eta_N I-\bS_{2:N,2:N})^{-1}\bS_{2:N,2:N}\right\} \right)$$
and apply the lemma on quadratic forms with $p=2$: There exists a constant $C$ such that
$$
\E_{\mathsf Z_{2:N,\cdot}}(|P_N|^2)\quad \le\quad  \frac{C}{n}\E(|Z^{(1)}_{1,1}|^4)\,\tr \left\{ (\eta_N I-\bS_{2:N,2:N})^{-2}\,\bS_{2:N,2:N}^2\right\} \ .
$$
Taking into account the facts that
$$
\lim_{n\to\infty} \eta_N =1\ ,\quad \varlimsup_N\lmax(\bS_{2:N,2:N})\stackrel{\mathcal P}\le d\qquad \textrm{and}\qquad \mu^{\bS_{2:N,2:N}}
\xrightarrow[N,n\to\infty]{\mathcal D} 
\delta_0\ \as.\ ,
$$
we obtain that 
$$
\frac{1}{n}\tr \left\{ (\eta_N-\bS_{2:N,2:N})^{-2}\bS_{2:N,2:N}^2\right\} =\int\frac{s^2}{(\eta_N-s)^2}\, \mu^{\bS_{2:N,2:N}} (\dd s)\quad 
\xrightarrow[N,n\to\infty]{\mathcal P} 
\quad 0\ .
$$
Thus $\E_{Z_{2:N,\cdot}}(|P_N|^2)$ converges to zero in probability, from which we deduce that for $\delta>0$,
$$
\E_{Z_{2:N,\cdot}}(\ind_{|P_N|^2>\delta}) \, \le \, \frac 1{\delta} \E_{Z_{2:N,\cdot}} \left( |P_N|^2 \ind_{|P_N|^2>\delta}\right)
 \xrightarrow[N,n\to\infty]{\mathcal P} 0\, .
$$ 
Finally 
$$
\P(|P_N|^2>\delta)=\E\, \E_{Z_{2:N,\cdot}}(\ind_{|P_N|^2>\delta})\xrightarrow[N,n\to\infty]{} 0\, ,
$$
which completes the proof of Proposition \ref{prop:useful-estimates}.

\section{Proof of Theorem~\ref{th:main_toepl}}
\label{sec:proof_toepl}
In order to study the spectral gap associated to the family of Toeplitz matrices and to prove Theorem~\ref{th:main_toepl}, we follow the method used in 
\cite{bogoya2012eigenvalues}. The main idea is to interpret the eigenvalues of the Toeplitz matrix $T_N$ as eigenvalues of an operator $\K_N$ using Widom-Shampine's Lemma, and then analyse the convergence of this operator, correctly normalized.

In this section, for $p\in[1,\infty]$, the $L^p$ norm of a function $f$ is denoted by $\|f\|_p$, and the $L^p\rightarrow L^p$ norm of an operator $\K$ is denoted by $\|\K\|_p$. Recall that $\|\K\|_p:=\sup_{\|f\|_p=1}\|\K f\|_p$.

\subsection{Widom-Shampine's Lemma and convergence of operators} \label{subsec:toepl_conv}
We first recall Widom-Shampine's Lemma, see \cite{bogoya2012eigenvalues} for a proof.
\begin{lemma}[Widom-Shampine] \label{lem:widom_shamp} Let $A=(a_{i,j})_{i,j=0}^{N-1}$ be a matrix with complex entries $a_{i,j}$, and let $G$ be the integral operator on $L^2(0,1)$ defined by
$$(Gf)(x)=\int_0^1 a_{\lfloor Nx\rfloor \lfloor Ny\rfloor}f(y)\dd y, \quad x\in(0,1).$$
Then a nonzero complex number $\lambda$ is an eigenvalue of $A$ of a certain algebraic multiplicity if and only if $\lambda/N$ is an eigenvalue of $G$ of the same algebraic multiplicity.
\end{lemma}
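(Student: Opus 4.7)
The plan is to identify a concrete finite-dimensional invariant subspace for $G$ and reduce the nonzero spectrum of $G$ to that of $A/N$. Let $V_N\subset L^2(0,1)$ denote the $N$-dimensional subspace spanned by the indicator functions $e_k:=\ind_{I_k}$, where $I_k:=[k/N,(k+1)/N)$ for $k=0,\dots,N-1$. The two observations I would establish are: (i) the range of $G$ is contained in $V_N$, and (ii) in the basis $(e_k)$, the restriction $G|_{V_N}$ coincides with the finite matrix $A/N$.

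For (i), a direct computation shows that for $x\in I_k$ one has $\lfloor Nx\rfloor=k$, hence
$$(Gf)(x)\ =\ \sum_{j=0}^{N-1}a_{k,j}\int_{I_j}f(y)\, dy\, ,$$
which is independent of $x\in I_k$. For (ii), writing $v=\sum_j v_j e_j\in V_N$, one has $\int_{I_j}v(y)\, dy=v_j/N$, so $(Gv)(x)=(Av)_k/N$ on $I_k$; in coordinates $G|_{V_N}=(1/N)A$. Since $A$ and $A/N$ share (generalized) eigenspaces with eigenvalues related by the rescaling $\mu\mapsto\mu/N$, the assertion of the lemma reduces to proving that the entire generalized eigenspace of $G$ at any $\lambda\ne 0$ lies inside $V_N$.

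This last reduction is where I would focus, because it is what distinguishes algebraic from merely geometric multiplicity. If $(G-\lambda I)^k f=0$ for some $k\ge 1$ and $\lambda\ne 0$, expanding the binomial gives
$$(-\lambda)^k f\ =\ -\sum_{i=1}^{k}\binom{k}{i}(-\lambda)^{k-i}G^i f\, , $$
so $f\in\mathrm{Range}(G)\subseteq V_N$. Hence the generalized eigenspace of $G$ at $\lambda\ne 0$ equals the generalized eigenspace of $G|_{V_N}=(1/N)A$ at $\lambda$, which under the coordinate identification $V_N\cong\C^N$ is in bijection with the generalized eigenspace of $A$ at $N\lambda$. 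Because $G$ is finite-rank (hence compact), its algebraic multiplicity at a nonzero eigenvalue is well defined as the dimension of this generalized eigenspace, and the dimensions match; substituting $\lambda\mapsto\lambda/N$ yields the lemma. I expect no analytic obstacle—the only subtle point is precisely the polynomial manipulation above that upgrades the geometric correspondence to an algebraic one.
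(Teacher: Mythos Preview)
The paper does not actually prove this lemma; it only states it and refers the reader to \cite{bogoya2012eigenvalues} for a proof. Your argument is correct and is essentially the standard one: the range of $G$ lies in the $N$-dimensional step-function space $V_N$, the matrix of $G|_{V_N}$ in the indicator basis is $A/N$, and your binomial expansion of $(G-\lambda I)^k$ forces every generalized eigenvector at a nonzero $\lambda$ into $V_N$, so the algebraic multiplicities of $G$ at $\lambda$ and of $A$ at $N\lambda$ coincide.
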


Let $c=(c_k)_{k\in\Z}$ be the sequence in Theorem~\ref{th:main_toepl}, and $\rho\in(-1,0)$ be the index, then the function $R(h):=c_{\lfloor |h|\rfloor}$ is even 
and regularly varying and $R(k)=c_k$. By Definition \ref{defi:regular_slow}, $R(N)\ne 0$ for large enough $N\in\N$, for convenience we can suppose that 
$R(N)\ne 0$ for all $N\in\N$ without loss of generality. By Widom-Shampine's Lemma, for each $N$, the matrix $T_N(c)/(NR(N))$ has the same nonzero 
eigenvalues (with the same multiplicities) as the integral operator $\K_N^{(\rho)}$ defined on $L^2(0,1)$ by
\begin{equation}(\K_N^{(\rho)}f)(x)=\int_0^1 \frac{R(\lfloor Nx\rfloor-\lfloor Ny\rfloor)}{R(N)}f(y)\dd y\,. \label{eq:def_Kappa_N}
\end{equation}
We will prove that the operators $\K_N^{(\rho)}$ converge in the operator norm to the operator $\K^{(\rho)}$ defined on $L^2(0,1)$ by
\begin{equation}
(\K^{(\rho)}f)(x)=\int_0^1 |x-y|^\rho f(y)\dd y. \label{eq:def_Kappa}
\end{equation}
For this we need the following Lemma \ref{lem:unif_con} which is a special case of the uniform convergence theorem of regularly varying functions.
\begin{lemma}[{\cite[Theorem~1.5.2]{bingham1989regular}}]
\label{lem:unif_con}If $R$ is regularly varying with index $\rho<0$, then for every $a>0$
$$
\sup_{x>a} \left| \frac{R(xy)}{R(y)} -  x^\rho\right| \xrightarrow[y\to\infty]{} 0\, .
$$
\end{lemma}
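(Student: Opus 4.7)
The statement is the classical Uniform Convergence Theorem for regularly varying functions of negative index, and the plan is to adapt the standard argument: reduce to the slowly varying case, then combine the UCT on compact sets with Potter's bounds to control the unbounded tail $[A, \infty)$.

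First, I would factor $R(y) = y^\rho L(y)$, where $L$ is slowly varying; this is built into the definition of regular variation of index $\rho$. Writing
$$\frac{R(xy)}{R(y)} - x^\rho = x^\rho \left(\frac{L(xy)}{L(y)} - 1\right),$$
the problem reduces to proving
$$\sup_{x \geq a} x^\rho \left|\frac{L(xy)}{L(y)} - 1\right| \xrightarrow[y\to\infty]{} 0.$$

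Next, I would fix a cutoff $A \geq \max(a, 1)$ to be chosen later and split the supremum into a compact piece on $[a, A]$ and a tail piece on $[A, \infty)$. On $[a,A]$, the classical UCT for slowly varying functions on compact subsets of $(0,\infty)$ gives $L(xy)/L(y) \to 1$ uniformly in $x$, while $x^\rho$ is bounded by $a^\rho$ there (since $\rho < 0$ makes $x^\rho$ decreasing); hence this contribution vanishes as $y \to \infty$ for each fixed $A$. For the tail $[A,\infty)$, I would invoke Potter's bounds (a standard corollary of Karamata's integral representation): for any $\delta > 0$ there exist constants $K_\delta$ and $Y_\delta$ such that $L(xy)/L(y) \leq K_\delta\, x^\delta$ for all $y \geq Y_\delta$ and $x \geq 1$. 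Choosing $\delta = -\rho/2 > 0$ yields
$$x^\rho\, \frac{L(xy)}{L(y)} \leq K_\delta\, x^{\rho/2},$$
and together with $x^\rho \leq A^\rho$, this makes the tail contribution smaller than any prescribed threshold by choosing $A$ large enough, uniformly over $y \geq Y_\delta$.

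Given $\varepsilon > 0$, I would first pick $A$ large enough to make the tail piece less than $\varepsilon/2$ for every $y \geq Y_\delta$, and then choose $Y \geq Y_\delta$ large enough that the compact piece is less than $\varepsilon/2$ for $y \geq Y$. This gives the desired uniform convergence. The main obstacle, if one insists on a fully self-contained argument, is establishing Potter's bounds themselves, which require Karamata's representation theorem for slowly varying functions; this is classical and would typically just be cited from \cite{bingham1989regular}.
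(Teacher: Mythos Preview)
Your argument is correct and follows the standard route to the Uniform Convergence Theorem for negative index: factor out $x^\rho$, use the UCT for slowly varying functions on compact sets, and control the tail with Potter's bounds. One small remark: when you bound the tail piece you should write out explicitly that
$$x^\rho\left|\frac{L(xy)}{L(y)}-1\right|\le K_\delta\, x^{\rho/2}+x^\rho\le K_\delta\, A^{\rho/2}+A^\rho,$$
so that both summands visibly tend to $0$ as $A\to\infty$; you state the ingredients but leave this final combination implicit.

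As for comparison: the paper does not give its own proof of this lemma. It is stated as a direct citation of \cite[Theorem~1.5.2]{bingham1989regular} and used as a black box in the proof of Lemma~\ref{lem:convergence_operator}. So there is nothing to compare against beyond noting that your sketch is essentially the proof one finds in the cited reference.
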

The following description of the asymptotic integral of regularly varying functions will also be useful in the sequel. 
\begin{lemma}[{\cite[Proposition~1.5.8]{bingham1989regular}}]\label{lem:equi_integ} If $R$ is regularly varying with index $\rho>-1$, and suppose that $R$ is locally bounded, then
$$\int_0^y R(x)\dd x\sim \frac{yR(y)}{1+\rho}  \quad (y\to+\infty).$$
\end{lemma}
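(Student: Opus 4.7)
The plan follows the classical Karamata-type strategy. After the substitution $x=ys$, the integral becomes
\[
\int_0^y R(x)\,\dd x \;=\; yR(y)\int_0^1 \frac{R(ys)}{R(y)}\,\dd s,
\]
and the target identity $\int_0^1 s^\rho\,\dd s=1/(1+\rho)$ suggests proving that the integral of the ratio converges to $1/(1+\rho)$. To neutralize the singular behaviour as $s\downarrow 0$, I would first split off a harmless constant: pick $A>0$ so that $R$ is bounded on $[0,A]$ (allowed by local boundedness) and write
\[
\int_0^y R(x)\,\dd x \;=\; I_0 \;+\; yR(y)\int_{A/y}^1 \frac{R(ys)}{R(y)}\,\dd s, \qquad I_0:=\int_0^A R(x)\,\dd x.
\]
Using the Karamata representation $R(y)=y^\rho L(y)$, one has $yR(y)=y^{1+\rho}L(y)\to\infty$, since $1+\rho>0$ and for any slowly varying $L$ and any small $\varepsilon>0$ one still has $y^{1+\rho-\varepsilon}\bigl(y^\varepsilon L(y)\bigr)\to\infty$. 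Hence $I_0$ will be negligible compared to the target $yR(y)/(1+\rho)$.

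The core of the proof is to show
\[
\int_{A/y}^1 \frac{R(ys)}{R(y)}\,\dd s \;\xrightarrow[y\to\infty]{}\;\int_0^1 s^\rho\,\dd s \;=\; \frac{1}{1+\rho}.
\]
The uniform convergence theorem for regularly varying functions (Lemma~\ref{lem:unif_con} for $\rho<0$, together with its analogue on compact subsets of $(0,\infty)$ for the general case) gives $R(ys)/R(y)\to s^\rho$ uniformly on $[\delta,1]$ for every $\delta>0$. To promote this to a convergence of integrals on $(0,1]$, I would invoke a Potter-type bound: for any $\varepsilon\in(0,1+\rho)$ there is a threshold $y_1$ such that, whenever $y\ge y_1$ and $ys\ge y_1$,
\[
\frac{R(ys)}{R(y)} \;\le\; C\max\bigl(s^{\rho+\varepsilon},\,s^{\rho-\varepsilon}\bigr) \;=\; C\,s^{\rho-\varepsilon}, \qquad s\in(0,1].
\]
Choosing $A\ge y_1$ ensures that this bound applies throughout $[A/y,1]$; since $\rho-\varepsilon>-1$, the dominating function $s^{\rho-\varepsilon}$ is integrable on $(0,1)$, and dominated convergence delivers the limit.

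The main obstacle is precisely this last step: the behaviour of $R(ys)/R(y)$ near $s=0$. The uniform convergence theorem does not extend to a neighbourhood of the origin, so producing an integrable domination forces one to go beyond Lemma~\ref{lem:unif_con} and invoke a quantitative Potter-type inequality, which sees the slow variation globally rather than only on compact sets bounded away from zero. Once this bound is secured, combining it with the elementary observation $yR(y)\to\infty$ and a dominated-convergence argument completes the proof.
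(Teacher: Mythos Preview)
The paper does not give its own proof of this lemma: it is simply quoted from \cite[Proposition~1.5.8]{bingham1989regular} and used as a black box. There is therefore no ``paper's proof'' to compare against.

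Your argument is correct and is, in fact, essentially the classical Karamata proof that one finds in the cited reference. The substitution $x=ys$, the splitting off of $\int_0^A R$ using local boundedness, the observation $yR(y)\to\infty$ via $1+\rho>0$, and the use of a Potter-type bound $R(ys)/R(y)\le C\,s^{\rho-\varepsilon}$ on $[A/y,1]$ (with $\varepsilon<1+\rho$) to justify dominated convergence are exactly the standard ingredients. The one caveat you rightly flag---that Lemma~\ref{lem:unif_con} as stated in the paper covers only $\rho<0$, whereas here $\rho>-1$ may be nonnegative---is handled by the general uniform convergence theorem on compact subsets of $(0,\infty)$, which is also in \cite{bingham1989regular}; the Potter bound you invoke is likewise a standard companion result there. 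So your proposal is sound, and matches the proof in the source the paper cites.
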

Recall that for an operator defined by
$(\K f)(x)=\int_0^1 K(x,y)f(y)\dd y$,
we have 
$$
\|\K\|_1\le M_1\qquad \textrm{and}\qquad \|\K\|_\infty\le M_\infty\, ,
$$ where
\begin{equation}
M_1:=\esssup_{y\in [0,1]}\int_0^1 |K(x,y)|\dd x,\quad M_\infty:=\esssup_{x\in [0,1]}\int_0^1 |K(x,y)|\dd y. \label{eq:majorant_op}
\end{equation}
If the kernel $K$ is symmetric for $x$ and $y$, then $M_1=M_\infty$. In this case and if $M_1=M_\infty<\infty$, then by the Riesz–Thorin interpolation theorem (cf. \cite[Theorem~2.2.14]{davies2007linear}, taking $p_0=q_0=1,p_1=q_1=\infty$), for all $p\in [1,+\infty]$, we have
\begin{equation}
    \|\K\|_p\le M_1=M_\infty. \label{eq:bound_Lp}
\end{equation}

We are now ready to prove the theorem. As mentioned above, we first prove the following convergence of operators.
\begin{lemma}Let $\rho\in(-1,0)$, then for any $p\in[1,\infty]$ and any $N\in\N$, the fomulas \eqref{eq:def_Kappa_N} and \eqref{eq:def_Kappa} define bounded operators $\K_N^{(\rho)}$ and $\K^{(\rho)}$ on $L^p(0,1)$. Moreover we have
\begin{equation}
    \lim_{N\to\infty}\|\K_N^{(\rho)}-\K^{(\rho)}\|_p=0 \label{eq:converge_operator}
\end{equation}
for any $p\in[1,\infty]$. \label{lem:convergence_operator}
\end{lemma}
\begin{proof}
Let $K_N^{(\rho)}:[0,1]^2 \rightarrow \R$ and $K_{\rho}:[0,1]^2 \rightarrow \R$ be the integral kernels defining respectively $\K_N^{(\rho)}$ and $\K^{(\rho)}$, that is,
$$K_N^{(\rho)}(x,y)=\frac{R(\lfloor Nx\rfloor-\lfloor Ny\rfloor)}{R(N)}, \quad K^{(\rho)}(x,y)=|x-y|^\rho.$$
Recall that since $R$ is even, the considered kernels are symmetric and the two essential supremums in \eqref{eq:majorant_op} of each kernel are equal. 
Moreover, for each $N$, $K_N^{(\rho)}$ is bounded on $[0,1]^2$ as it takes only a finite number of values, hence
$$
\esssup_{x\in [0,1]}\int_0^1 |K_N^{(\rho)}(x,y)|\dd y=\esssup_{y\in [0,1]}\int_0^1 |K_N^{(\rho)}(x,y)|\dd x <\infty\, .
$$
For $\rho\in(-1,0)$, easy calculations yield
$$\esssup_{x\in[0,1]}\int_0^1|x-y|^\rho\dd y=\esssup_{y\in[0,1]}\int_0^1|x-y|^\rho\dd x=\frac{2^{-\rho}}{(1+\rho)}<\infty\, .$$
So by \eqref{eq:bound_Lp}, for all $p\in [1,+\infty]$ we have
$$\|\K_N^{(\rho)}\|_p<\infty\quad\text{and }\quad\|\K^{(\rho)}\|_p\le \frac{2^{-\rho}}{(1+\rho)}\, .$$
Also by \eqref{eq:bound_Lp}, we have
\begin{equation}
\|\K_N^{(\rho)}-\K^{(\rho)}\|_p\ \le\ \esssup_{y\in[0,1]}\int_0^1\left|\frac{R(\lfloor Nx\rfloor-\lfloor Ny\rfloor)}{R(N)}-|x-y|^\rho\right|\dd x\, . \label{eq:bound_diff}
\end{equation}

We now prove that $\|\K_N^{(\rho)}-\K^{(\rho)}\|_p\to 0$ by showing that the RHS of \eqref{eq:bound_diff} goes to $0$ as $N\to\infty$. Taking an arbitrary $\varepsilon\in (0,1)$, we set $A_\varepsilon:=\{(x,y)\in[0,1]^2:|x-y|>\varepsilon\}$ and for $y\in[0,1]$, we set $A_\varepsilon(y):=\{x\in[0,1]:(x,y)\in A_\varepsilon\}=\{x\in[0,1]:|x-y|>\varepsilon\}$.

By the inequality
\begin{equation}
|x-y|-\frac{1}{N} \le \frac{|\lfloor Nx\rfloor-\lfloor Ny\rfloor|}{N}\le |x-y|+\frac{1}{N} \label{eq:ineq_intpart}
\end{equation}
and the uniform continuity of the function $x\mapsto x^\rho$ on $[\frac \varepsilon2,+\infty)$, we can take $N_1=N_1(\varepsilon)\in\N$ such that for $ N>N_1$ and 
$(x,y)\in A_\varepsilon$ we have
\begin{equation}
    \frac{|\lfloor Nx\rfloor-\lfloor Ny\rfloor|}{N}>\frac{\varepsilon}{2} \label{eq:ineq_A1_1}
\end{equation}
and 
\begin{equation}
    \left|\frac{|\lfloor Nx\rfloor-\lfloor Ny\rfloor|^\rho}{N^\rho}-|x-y|^\rho\right|<\frac{\varepsilon}{2}.\label{eq:ineq_A1_2}
\end{equation}
Then applying Lemma \ref{lem:unif_con} with $a=\varepsilon/2$, we can find $N_2=N_2(\varepsilon)\in\N$ such that for $N>N_2$ and for all $c$ satisfying $|c|\ge \varepsilon/2$, we have
\begin{equation}\left|\frac{R(cN)}{R(N)}-|c|^\rho\right|<\frac{\varepsilon}{2}. \label{eq:ineq_A1_3} \end{equation}
For all $N>\max(N_1,N_2)$ and $(x,y)\in A_\varepsilon$, let $c=\frac{\lfloor Nx\rfloor-\lfloor Ny\rfloor}{N}$ then by \eqref{eq:ineq_A1_1} we have $|c|>\varepsilon/2$. Moreover 
\begin{equation}
\left|\frac{R(\lfloor Nx\rfloor-\lfloor Ny\rfloor)}{R(N)}-\frac{|\lfloor Nx\rfloor-\lfloor Ny\rfloor|^\rho}{N^\rho}\right|<\frac{\varepsilon}{2} \label{eq:ineq_A1_4}
\end{equation}
by \eqref{eq:ineq_A1_3}.
Combining \eqref{eq:ineq_A1_2}, \eqref{eq:ineq_A1_4} and the triangle inequality, we have
$$\left|\frac{R(\lfloor Nx\rfloor-\lfloor Ny\rfloor)}{R(N)}-|x-y|^\rho\right|<\varepsilon$$
for all $N>\max(N_1,N_2)$ and $(x,y)\in A_\varepsilon$.
Then for $N$ large enough, we have
\begin{equation}
\esssup_{y\in[0,1]}\int_{A_\varepsilon(y)}\left|\frac{R(\lfloor Nx\rfloor-\lfloor Ny\rfloor)}{R(N)}-|x-y|^\rho\right|\dd x<\varepsilon\,. \label{eq:proof_toep_control1}
\end{equation}

On the other hand, for all $y\in[0,1]$, we have
\begin{equation}
\int_{[0,1]\backslash A_\varepsilon(y)}|x-y|^\rho\dd x\le \int_{-\varepsilon}^\varepsilon |x|^\rho\dd x=\frac{2\varepsilon^{1+\rho}}{1+\rho}. \label{eq:ineq_A2_1}
\end{equation}
Hence we just need to control the integral
$$\int_{[0,1]\backslash A_\varepsilon(y)}\left| \frac{R(\lfloor Nx\rfloor-\lfloor Ny\rfloor)}{R(N)}\right| \dd x\,.$$
Notice that both $R$ and $|R|$ are even, locally bounded and regularly varying with index $\rho$. By Lemma \ref{lem:equi_integ}, we have
\begin{eqnarray}
\int_{[0,1]\backslash A_\varepsilon(y)}\left| \frac{R(\lfloor Nx\rfloor-\lfloor Ny\rfloor)}{R(N)}\right| \dd x 
&\le& \int_{y-\varepsilon}^{y+\varepsilon}\left|\frac{R(\lfloor Nx\rfloor-\lfloor Ny\rfloor)}{R(N)}\right| \dd x \nonumber \\
    &\stackrel{(a)}=&\int_{-\varepsilon}^{\varepsilon}\left| \frac{R(\lfloor Nx+(Ny-\lfloor Ny\rfloor)\rfloor)}{R(N)}\right| \dd x \nonumber \\
    &=&\int_{-N\varepsilon}^{N\varepsilon}\left| \frac{R(\lfloor x+(Ny-\lfloor Ny\rfloor)\rfloor)}{NR(N)}\right| \dd x \nonumber\\
    &\le& \int_{-N\varepsilon-1}^{N\varepsilon+1}\left| \frac{R(\lfloor x\rfloor)}{NR(N)}\right| \dd x \nonumber \\
    & \stackrel{(b)}\sim& 2\, \left| \frac{R(N\varepsilon+1)}{R(N)}\right| \, \frac{\varepsilon}{1+\rho}\quad \stackrel{(c)}\sim\quad  \frac{2\varepsilon^{1+\rho}}{1+\rho} 
    \label{eq:ineq_A2_2}
\end{eqnarray}
as $N\to \infty$, where $(a)$ follows from a change of variable and the fact that $\lfloor x \rfloor + h = \lfloor x +h \rfloor$ for every $h\in \mathbb{Z}$, 
$(b)$ follows from Lemma \ref{lem:equi_integ} and $(c)$ from Lemma \ref{lem:unif_con}.
Notice that the controls \eqref{eq:ineq_A2_1} and \eqref{eq:ineq_A2_2} are independent of $y$, hence for $N$ large enough, we have
\begin{equation}
\esssup_{y\in[0,1]}\int_{[0,1]\backslash A_\varepsilon(y)}\left|\frac{R(\lfloor Nx\rfloor-\lfloor Ny\rfloor)}{R(N)}-|x-y|^\rho\right|\dd x\quad <\quad \frac{5\varepsilon^{1+\rho}}{1+\rho}\,. \label{eq:proof_toep_control2}
\end{equation}
Combining \eqref{eq:proof_toep_control1} and \eqref{eq:proof_toep_control2}, and taking $\varepsilon\to 0$, we finally obtain  
$$
\|\K_N^{(\rho)}-\K^{(\rho)}\|_p\xrightarrow[N\to\infty]{} 0
$$ 
for all $p\in[1,\infty]$.
\end{proof}
As a consequence of Lemma~\ref{lem:convergence_operator}, we conclude that $\K^{(\rho)}$ is compact on $L^p(0,1)$ for all $p\in[1,\infty]$, because it is the limit in operator norm of finite dimensional operators $\K_N^{(\rho)}$. 

We will complete the proof of Theorem~\ref{th:main_toepl} in the next section.

\subsection{Convergence of eigenvalues and simplicity of the largest eigenvalue}
First we note that $\K^{(\rho)}$, as an operator on $L^2(0,1)$, is self-adjoint and nonnegative definite. The definite nonnegativity can be concluded from the convergence \eqref{eq:converge_operator}. Indeed, taking the slowly varying function $L\equiv 1$ in the definition \eqref{eq:intro:long_memory} of $T_N$, by Polya's Theorem (see for example Theorem~3.5.22 of \cite{jacob2001pseudo}), the Toeplitz matrices $T_N$ are positive definite for all $N$. Thus by Widom-Shampine Lemma~\ref{lem:widom_shamp}, $\K^{(\rho)}_N$ does not have negative eigenvalues, since it has the same nonzero eigenvalues as $T_N/(N\gamma(N))$. Then for any $f\in L^2(0,1)$, we have
$$\langle \K_N^{(\rho)} f,\, f\rangle\ge 0\,.$$
Let $N\to\infty$, from \eqref{eq:converge_operator} we have
$$\langle \K^{(\rho)} f,\, f\rangle\ge 0\,.$$

For $k=1,2,\dots$, let $a_k^{(\rho)}$ be the $k$-th largest eigenvalue of $\K^{(\rho)}$. The asymptotic formula of $a_k^{(\rho)}$ as $k\to\infty$ has been obtained by Kac and Widom. See for example eq. (2) of \cite{widom1963asymptotic}. Thus $a_k^{(\rho)}>0$ for all $k\ge 1$.

Let $a^{(\rho)}_{N,k}=\lambda_k(T_N(c))/(NR(N))$ be the $k$-th largest eigenvalue of $\K_N^{(\rho)}$. From the convergence $\|\K_N^{(\rho)}-\K^{(\rho)}\|_2\to 0$ we deduce that $a^{(\rho)}_{N,k}\to a_k^{(\rho)}$ as $N\to\infty$. In fact, as $\K_N^{(\rho)}$ and $\K^{(\rho)}$ are compact and self-adjoint, by the Min-Max Formula (see also \cite[Theorem 4.12]{teschl2014mathematical}) we have
\begin{eqnarray*}
a^{(\rho)}_{N,k}&=&\min_{\dim U=k-1}\max_{\substack{u\in U^\perp \\ \|u\|_2=1}}\langle u,\K_N^{(\rho)}u\rangle\\
&\le& \min_{\dim U=k-1}\max_{\substack{u\in U^\perp \\ \|u\|_2=1}}\langle u,\K^{(\rho)} u\rangle+\|\K_N^{(\rho)}-\K^{(\rho)}\|_2\quad =\quad a_k^{(\rho)}+\|\K_N^{(\rho)}-\K^{(\rho)}\|_2\, .
\end{eqnarray*}
Symmetrically, we also have $a_k^{(\rho)}\le a^{(\rho)}_{N,k}+\|\K_N^{(\rho)}-\K^{(\rho)}\|_2$, from which we deduce
$$|a^{(\rho)}_{N,k}-a_k^{(\rho)}|\le \|\K_N^{(\rho)}-\K^{(\rho)}\|_2$$
for all $N$ and $k$. This implies the convergence of each eigenvalue $a^{(\rho)}_{N,k}$ toward $a_k^{(\rho)}$.

We now prove that $a_1^{(\rho)}$ is a simple eigenvalue of $\K^{(\rho)}$. Let $u\in L^2(0,1)$ be an eigenfunction of $a_1^{(\rho)}$, then by the mini-max formula $a_1^{(\rho)}=\max_{f\in L^2,\|f\|_2=1}\langle f,\K^{(\rho)} f\rangle$, we have
$$
a_1^{(\rho)}=\int_0^1\int_0^1 |x-y|^\rho u(x)\overline{u(y)}\dd x\dd y\le \int_0^1\int_0^1 |x-y|^\rho|u(x)u(y)|\dd x\dd y\le a_1^{(\rho)}
$$
which implies
$$\int_0^1\int_0^1 |x-y|^\rho(|u(x)u(y)|-u(x)\overline{u(y)})\dd x\dd y=0.$$
Hence $|u(x)u(y)|=u(x)\overline{u(y)}$ for $(x,y)\in[0,1]^2$ $\dd x\dd y$-a.e. This implies that for almost all $y\in[0,1]$, the equality $|u(x)u(y)|=u(x)\overline{u(y)}$ holds for almost all $x\in[0,1]$. Let $y_0$ be such that $u(y_0)\ne 0$ and $c=u(y_0)/|u(y_0)|=e^{i\varphi_0}$. Then for almost every $x\in[0,1]$, we have
$$u(x)=\frac{|u(x)u(y_0)|}{\overline{u(y_0)}}=c|u(x)|.$$
So up to a nonzero constant multiplier we can suppose that $u\ge 0$ on $[0,1]$. Therefore
$$a_1^{(\rho)} u(x)=\int_0^1 |x-y|^\rho u(y)\dd y\ge \int_0^1 u(y)\dd y>0\, .$$
This implies that $a_1^{(\rho)}>0$ and $u(x)>0$ for all $x\in[0,1]$. Then for any other function $v\in L^2(0,1)$ s.t. $\langle u,v\rangle =0$, $v$ cannot be an eigenfunction of the eigenvalue $a_1^{(\rho)}$. Otherwise following the same line of reasoning as previously we may write $v=|v|e^{\ii\tilde \varphi_0}$ where $|v|$ is also an eigenfunction associated to $a_1^{(\rho)}$, $|v|>0$ on $[0,1]$ and $\langle u,|v| \rangle = e^{-\ii\tilde \varphi_0} \langle u,v\rangle =0$. But $u,|v|>0$ contradict the orthogonality
$$
\langle u,|v|\rangle=\int_0^1 u(x)|v(x)|\dd x=0\, .
$$

Finally recall that if $R$ is regularly varying of index $\rho\in(-1,0)$, then $NR(N)\to\infty$ as $N\to\infty$ by \cite[Prop. 1.3.6(v)]{bingham1989regular}. Combining Widom-Shampine's lemma and the convergence of eigenvalues, we obtain
$$
a^{(\rho)}_{N,k}=\frac{\lambda_k(T_N(c))}{NR(N)} \xrightarrow[N\to\infty]{} a_k^{(\rho)}\, .
$$ 
Hence $\lambda_k(T_N(c))\sim a_k^{(\rho)} NR(N)\to\infty$ as $N\to\infty$ and 
$$
\frac{\lambda_2(T_N(c))}{\lmax(T_N(c))}\xrightarrow[N\to\infty]{} \frac{a_2^{(\rho)}}{a_1^{(\rho)}}\, <\, 1\, .
$$ 
Proof of Theorem~\ref{th:main_toepl} is now completed.

\bibliography{ref}
\bibliographystyle{plain}

%
%
%
%
\end{document}